\def\input@path{{figures/}}\makeatother
\newtheorem{theorem}{Theorem}
\newtheorem{corollary}[theorem]{Corollary}
\newtheorem{proposition}[theorem]{Proposition}
\newtheorem{lemma}[theorem]{Lemma}
\newtheorem*{theorem*}{Theorem}
\theoremstyle{definition}
\newtheorem{remark}[theorem]{Remark}
\newcommand{\R}{\mathbb{R}} 
\newcommand{\fS}{\mathfrak{S}} 
\renewcommand{\b}[1]{\mathbf{#1}} 
\newcommand{\set}[2]{\left\{ #1 \;\middle|\; #2 \right\}} 
\newcommand{\bigset}[2]{\big\{ #1 \;\big|\; #2 \big\}} 
\newcommand{\ssm}{\smallsetminus} 
\newcommand{\dotprod}[2]{\left\langle \, #1 \; \middle| \; #2 \, \right\rangle} 
\newcommand{\one}{{1\!\!1}} 
\newcommand{\eqdef}{\mbox{\,\raisebox{0.2ex}{\scriptsize\ensuremath{\mathrm:}}\ensuremath{=}\,}} 
\newcommand{\defeq}{\mbox{~\ensuremath{=}\raisebox{0.2ex}{\scriptsize\ensuremath{\mathrm:}} }} 
\DeclareMathOperator{\conv}{conv} 
\DeclareMathOperator{\inv}{inv} 
\newcommand{\fref}[1]{Figure~\ref{#1}} 
\newcommand{\ie}{\textit{i.e.}~} 
\definecolor{darkblue}{rgb}{0,0,0.7} 
\definecolor{green}{RGB}{57,181,74} 
\definecolor{violet}{RGB}{147,39,143} 
\newcommand{\darkblue}{\color{darkblue}} 
\newcommand{\defn}[1]{\textsl{\darkblue #1}} 
\newcommand{\identity}{12 \dots n} 
\newcommand{\meet}{\wedge} 
\newcommand{\join}{\vee} 
\newcommand{\Pos}{\mathrm{Pos}} 
\newcommand{\shard}{\Sigma}
\newcommand{\shards}{\boldsymbol{\Sigma}}
\newcommandx{\ray}[1][1=r]{\mathbf{#1}} 
\newcommandx{\rays}[1][1=R]{\mathbf{#1}} 
\newcommand{\hyp}{\mathbf{H}} 
\newcommand{\Hyp}[1]{\mathbf{H}_{#1}} 
\newcommand{\HA}{\mathcal{H}} 
\newcommandx{\Perm}[1][1=n]{\mathsf{Perm}(#1)} 
\newcommandx{\Asso}[2][1=n]{\mathsf{Asso}(#1)} 
\newcommandx{\Zono}[2][1=n]{\mathsf{Zono}(#1)} 
\newcommandx{\Fan}[1][1=F]{\mathcal{#1}} 
\title{Quotientopes}
\thanks{VP was partially supported by the French ANR (grants SC3A~15\,CE40\,0004\,01 and CAPPS~17\,CE40\,0018). \\ \hspace*{.5cm} FS was partially supported by the Spanish Ministry of Science (grants MTM2014-54207-P and MTM2017-83750-P), by the Einstein Foundation Berlin (grant EVF-2015-230), and by the NSF (grant DMS-1440140) while he was in residence at MSRI Berkeley in the fall of 2017.}
\author{Vincent Pilaud}
\address[VP]{CNRS \& LIX, \'Ecole Polytechnique, Palaiseau}
\email{vincent.pilaud@lix.polytechnique.fr}
\urladdr{\url{http://www.lix.polytechnique.fr/~pilaud/}}
\author{Francisco Santos}
\address[FS]{Universidad de Cantabria}
\email{francisco.santos@unican.es}
\urladdr{\url{http://personales.unican.es/santosf}}
\begin{document}

\begin{abstract}
For any lattice congruence of the weak order on~$\fS_n$, N.~Reading proved that glueing together the cones of the braid fan that belong to the same congruence class defines a complete fan. We prove that this fan is the normal fan of a polytope.

\medskip
\noindent
\textsc{msc classes.} 52B11, 52B12, 03G10, 06B10
\end{abstract}

\vspace*{-.4cm}

\maketitle

\vspace{-.4cm}


\section{Introduction}

Denote by~$\fS_n$ the set of permutations of~$[n] \eqdef \{1, \dots, n\}$. We consider the classical \defn{weak order} on~$\fS_n$ defined by inclusion of inversion sets. That is $\sigma \le \tau$ if and only if $\inv(\sigma) \subseteq \inv(\tau)$ where $\inv(\sigma) \eqdef \set{(\sigma(i), \sigma(j))}{1 \le i < j \le n \text{ and } \sigma(i) > \sigma(j)}$. The Hasse diagram of the weak order can be seen geometrically:
\begin{enumerate}
\item as the dual graph of the \defn{braid fan} of type~$A_{n-1}$, \ie the fan defined by the arrangement of the hyperplanes~$\Hyp{ij} \eqdef \set{\b{x} \in \R^n}{x_i = x_j}$ for all~$1 \le i < j \le n$, directed from the region $x_1 < \dots < x_n$ to the opposite one,
\item or as the graph of the \defn{permutahedron} $\Perm \eqdef \conv \set{\big( \sigma^{-1}(1), \dots, \sigma^{-1}(n) \big)}{\sigma \in \fS_n}$, oriented in the linear direction~$\alpha \eqdef (-n+1, -n+3, \dots, n-3, n-1)$.
\end{enumerate}
See~\fref{fig:weakOrder4} for illustrations when~$n = 4$. 

We aim at studying similar geometric realizations for lattice quotients of the weak order~on~$\fS_n$.
Recall that a \defn{lattice congruence} of a lattice~$(L,\le,\meet,\join)$ is an equivalence relation on~$L$ that respects the meet and the join operations, \ie such that $x \equiv x'$ and~$y \equiv y'$ implies $x \meet y \, \equiv \, x' \meet y'$ and~$x \join y \, \equiv \, x' \join y'$. A lattice congruence~$\equiv$ automatically defines a \defn{lattice quotient}~$L/{\equiv}$ on the congruence classes of~$\equiv$ where the order relation is given by~$X \le Y$ iff there exists~$x \in X$ and~$y \in Y$ such that~$x \le y$. The meet~$X \meet Y$ (resp. the join~$X \join Y$) of two congruence classes~$X$ and~$Y$ is the congruence class of~$x \meet y$ (resp. of~$x \join y$) for arbitrary representatives~$x \in X$~and~$y \in Y$.

Several examples of relevant combinatorial structures arise from lattice quotients of the weak order. The fundamental example is the Tamari lattice introduced by D.~Tamari in~\cite{Tamari}. It can be defined on different Catalan families (Dyck paths, binary trees, triangulations, non-crossing partitions, etc), and its cover relations correspond to local moves in these structures (exchange, rotation, flip, etc). The Tamari lattice can also be interpreted as the quotient of the weak order by the sylvester congruence on~$\fS_n$ defined as the transitive closure of the rewriting rule~$UacVbW \equiv^\textrm{sylv} UcaVbW$ where~${a < b < c}$ are letters while~$U,V,W$ are words of~$[n]$. See \fref{fig:latticeQuotient4} for an illustration when~$n = 4$. This congruence has been widely studied in connection to geometry and algebra~\cite{Loday, LodayRonco, HivertNovelliThibon-algebraBinarySearchTrees}. Among many other examples of relevant lattice quotients of the weak order, let us mention the (type~$A$) Cambrian lattices~\cite{Reading-CambrianLattices, ChatelPilaud}, the boolean lattice, the permutree lattices~\cite{PilaudPons-permutrees}, the increasing flip lattice on acyclic twists~\cite{Pilaud-brickAlgebra}, the rotation lattice on diagonal rectangulations~\cite{LawReading, Giraudo}, etc.

In his vast study of lattice congruences of the weak order, N.~Reading observed that ``\emph{lattice congruences on the weak order know a lot of combinatorics and geometry}'' \cite[Sect.~10.7]{Reading-FiniteCoxeterGroupsChapter}. Geometrically, he showed that each lattice congruence~$\equiv$ of the weak order is realized by a complete fan~$\Fan_\equiv$ that we call \defn{quotient fan}. Its maximal cones correspond to the congruence classes of~$\equiv$ and are just obtained by glueing together the cones of the braid fan corresponding to permutations that belong to the same congruence class of~$\equiv$. Although this result was stated in a much more general context (that of lattice congruences on lattice of regions of hyperplane arrangements), we restrict our discussion to lattice quotients of the weak order on~$\fS_n$.

\begin{theorem}[\cite{Reading-HopfAlgebras}]
\label{thm:fanQuotient}
For any lattice congruence~$\equiv$ of the weak order on~$\fS_n$, the cones obtained by glueing together the cones of the braid fan that belong to the same congruence class of~$\equiv$ form a fan~$\Fan_\equiv$ whose dual graph coincides with the Hasse diagram of the quotient of the weak order by~${\equiv}$.
\end{theorem}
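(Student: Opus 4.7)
The plan is to follow the shard-theoretic approach of N.~Reading. The first observation is that every congruence class is an interval in the weak order: since $\equiv$ respects meet and join, whenever $\sigma \equiv \tau$ and $\sigma \le \rho \le \tau$ we have $\rho = \rho \meet \tau \equiv \rho \meet \sigma = \sigma$. As $\fS_n$ is finite, each class~$X$ therefore equals $[\projDown X, \projUp X]$, where $\projDown X$ and $\projUp X$ are its minimum and maximum. In particular $X$ is ``weak-order-convex'' in the sense that any permutation between two of its elements again belongs to~$X$.

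The heart of the argument is to show that the union $C_X \eqdef \bigcup_{\sigma \in X} C_\sigma$ of the corresponding braid cones is a convex polyhedral cone. For this I would invoke the theory of shards of the braid arrangement: every join-irreducible element of the weak order on $\fS_n$ corresponds to an arc between two integers, which in turn cuts out a specific piece~$\shard$ of the corresponding hyperplane~$\Hyp{ij}$. The congruence $\equiv$ selects a subset $\shards_\equiv$ of shards, namely those whose underlying join-irreducible is not contracted by~$\equiv$, and this set is closed under a natural forcing order on join-irreducibles. The key lemma is that the connected components of $\R^n \setminus \bigcup \shards_\equiv$ are exactly the sets~$C_X$, and that each such component is convex because the shards in $\shards_\equiv$ fit together into a bona fide piecewise-linear cut of space. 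Establishing this convexity is the main technical obstacle: it requires a careful analysis of how shards on different hyperplanes abut each other, together with the observation that crossing a shard of $\shards_\equiv$ corresponds to a covering relation $\sigma \lessdot \tau$ with $\sigma \not\equiv \tau$, so that the forcing-closure of $\shards_\equiv$ is exactly what guarantees a well-defined fan decomposition.

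Once convexity of each $C_X$ is established, the remaining steps are largely formal. For any two distinct classes $X$ and $Y$, the intersection $C_X \cap C_Y$ is contained in $\bigcup \shards_\equiv$ (since on the interior of a braid cone the congruence class is uniquely determined) and is a common face of both by the shard structure; together with the obvious coverage $\bigcup_X C_X = \R^n$ this yields a complete fan $\Fan_\equiv$. For the dual graph, two classes $X$ and $Y$ are adjacent in $\Fan_\equiv$ iff $C_X$ and $C_Y$ share a facet iff there exist $\sigma \in X$ and $\tau \in Y$ with $\sigma \lessdot \tau$ a cover in the weak order and $\sigma \not\equiv \tau$, which is precisely the condition defining the cover relation $X \lessdot Y$ in $\fS_n/{\equiv}$. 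Hence the dual graph of $\Fan_\equiv$ coincides with the Hasse diagram of the quotient lattice.
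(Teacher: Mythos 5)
First, note that the paper does not prove this statement at all: Theorem~\ref{thm:fanQuotient} is imported verbatim from N.~Reading's work with a citation, and the present paper only uses it (its own contribution starts with Theorem~\ref{thm:quotientopes}). So there is no in-paper proof to compare against; your proposal has to be judged as a reconstruction of Reading's argument.

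As such, it is a correct roadmap but not a proof. Your first step is fine: the computation $\rho = \rho \meet \tau \equiv \rho \meet \sigma = \sigma$ shows order-convexity, and closure of a class under $\meet$ and $\join$ plus finiteness gives a minimum and maximum, so each class is an interval. But the two load-bearing claims are then only named, not established. You write that the key lemma is that the components of $\hyp \ssm \bigcup \shards_\equiv$ are exactly the $C_X$ and that each is convex, and you yourself flag ``establishing this convexity is the main technical obstacle'' --- and then do not establish it. In fact convexity of a single $C_X$ is the easy half: since $X = [\mu,\nu]$ is an interval and the weak order is inclusion of inversion sets, $\bigcup_{\sigma\in X} C(\sigma)$ equals the intersection of the closed half-spaces $\{x_b \le x_a\}$ for $(a,b)\in\inv(\mu)$ and $\{x_a\le x_b\}$ for $(a,b)\notin\inv(\nu)$, so no shard theory is needed there. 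The genuinely hard part of Reading's theorem is the fan condition --- that for distinct classes $X,Y$ the set $C_X\cap C_Y$ is a \emph{face} of both --- which you dispose of with the phrase ``is a common face of both by the shard structure.'' That phrase is the theorem. Proving it requires exactly the analysis you defer: that $\shards_\equiv$ is an upper ideal of the forcing order, that the shards not in $\shards_\equiv$ are entirely absorbed into the interiors of the glued cones, and that the surviving shards assemble into supporting hyperplane pieces along which adjacent $C_X$'s meet face-to-face. Your identification of the dual graph with the Hasse diagram is fine once all of this is granted, but as written the proposal restates the structure of the proof rather than supplying it.
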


\begin{figure}
	\capstart
	\centerline{\includegraphics[scale=.6]{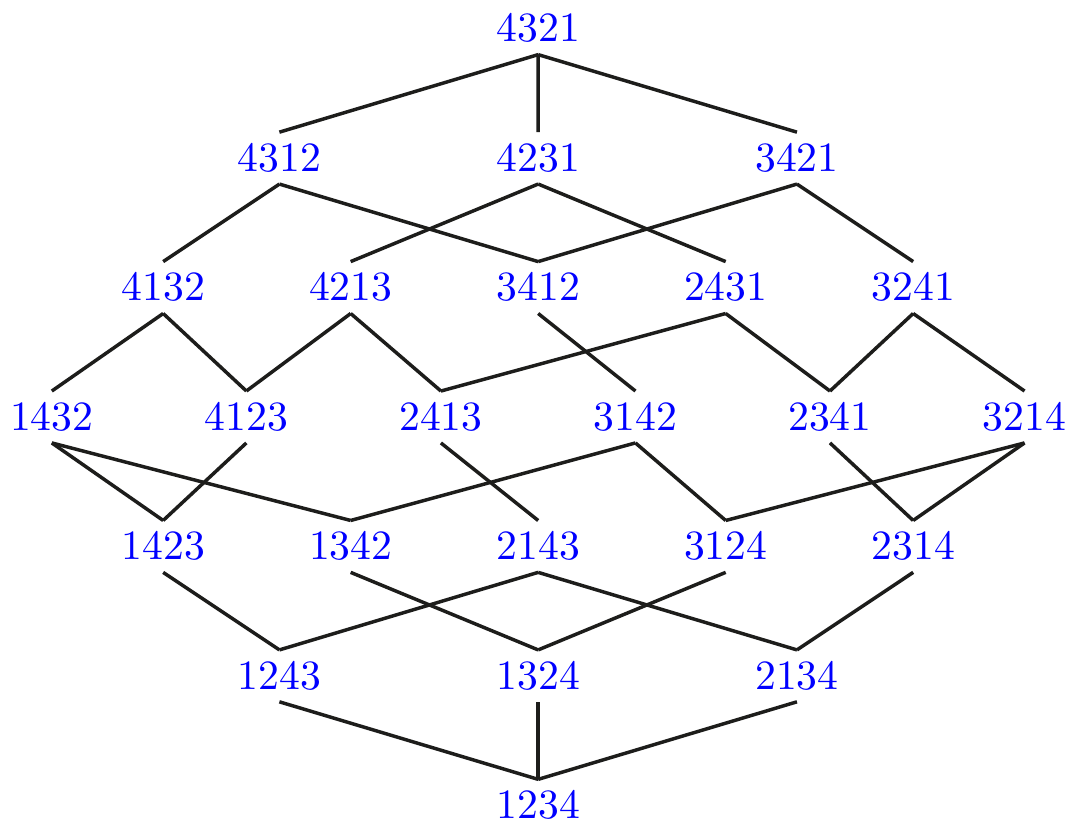} \; \includegraphics[scale=.6]{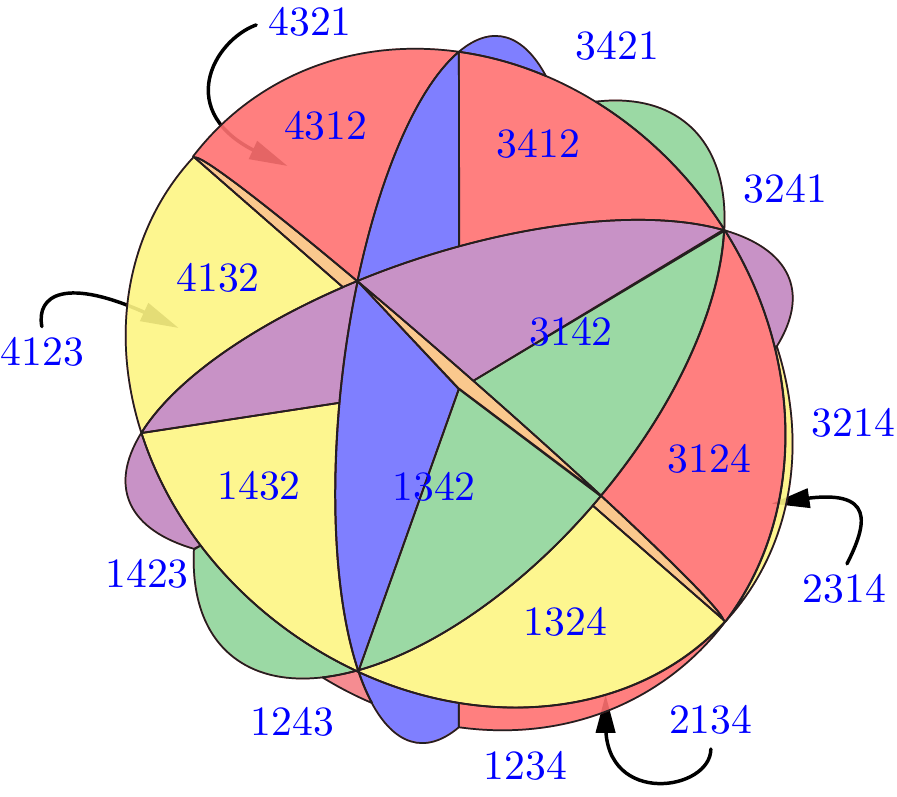} \; \includegraphics[scale=.6]{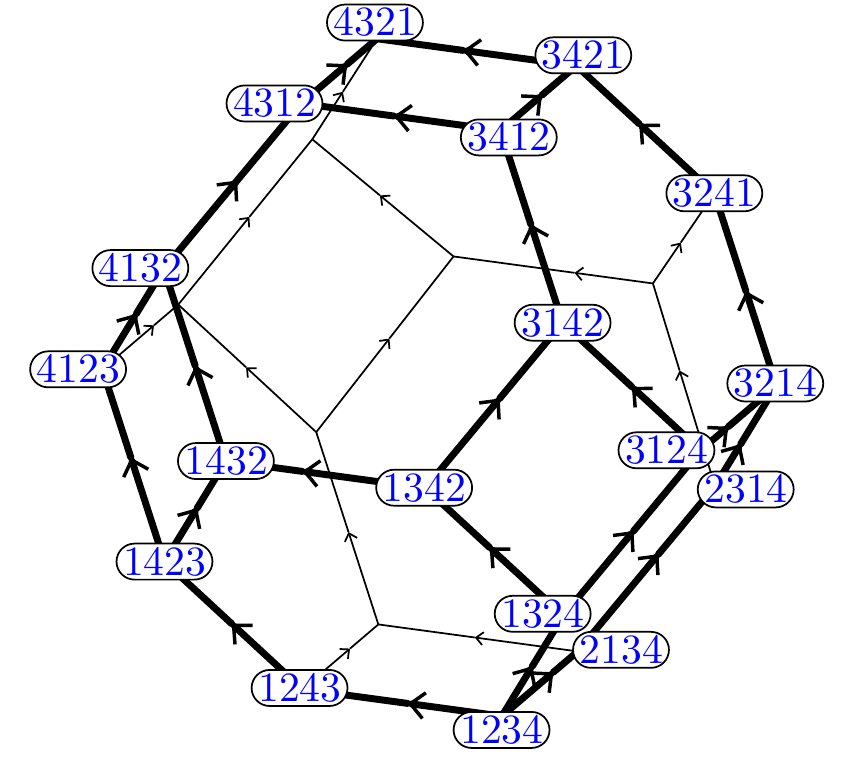}}
	\caption{The Hasse diagram of the weak order on~$\fS_4$ (left) can be seen as the dual graph of braid fan (middle) or as an orientation of the graph of the permutahedron~$\Perm[4]$ (right).}
	\vspace{-.3cm}
	\label{fig:weakOrder4}
\end{figure}

\begin{figure}
	\capstart
	\centerline{\includegraphics[scale=.6]{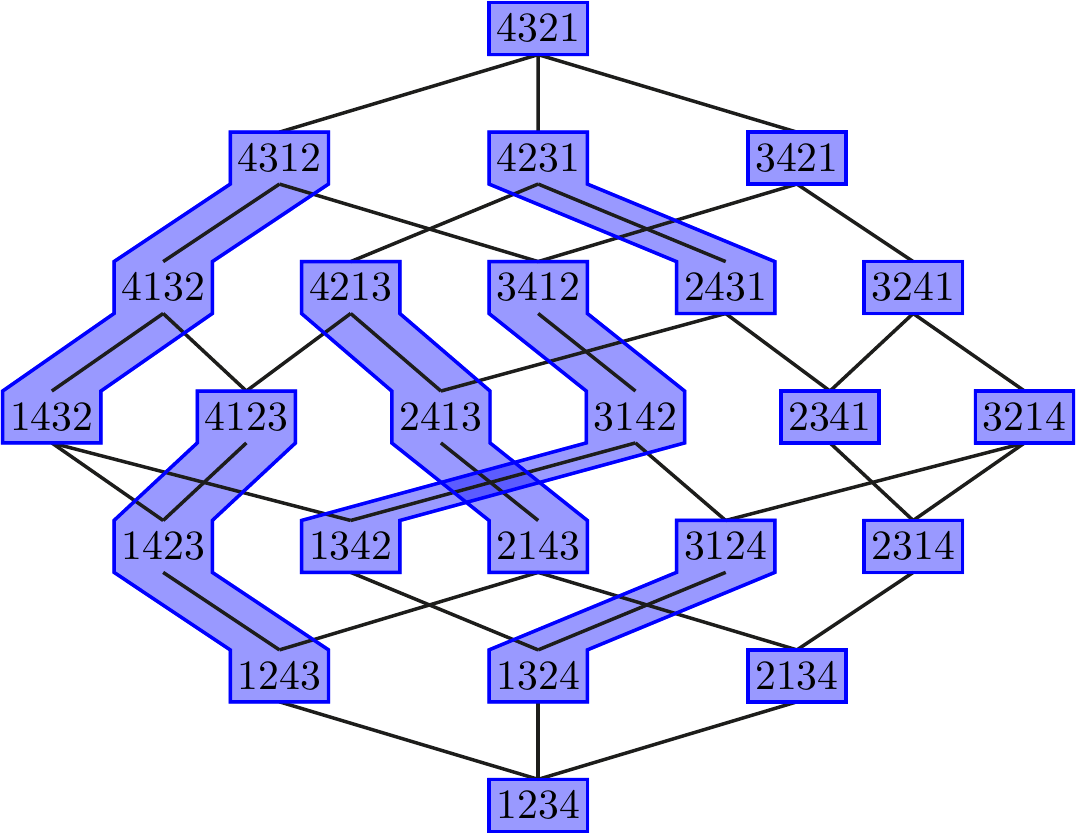} \qquad \includegraphics[scale=.48]{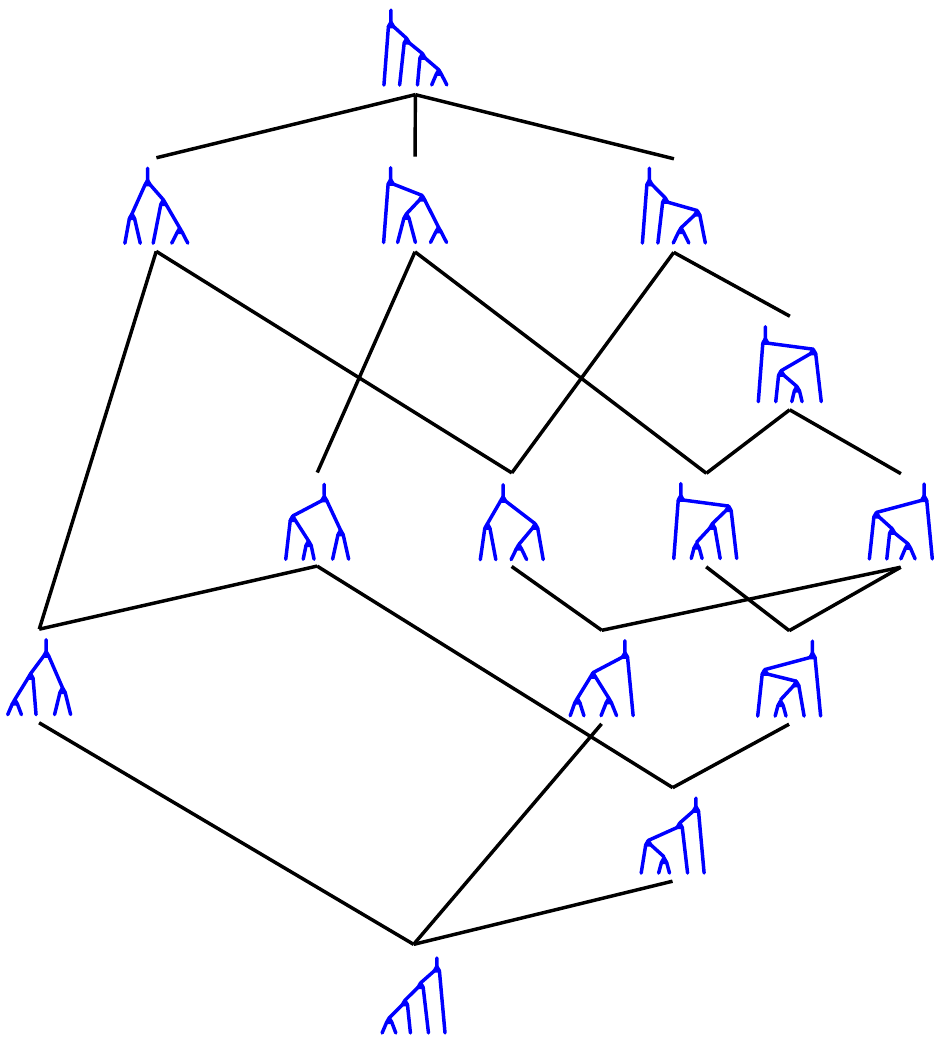}}
	\caption{The Tamari lattice (right) is the quotient of the weak order by the sylvester congruence~$\equiv^\textrm{sylv}$~(left). Each congruence class is given by a blue box on the left and corresponds to a binary tree on the right.}
	\vspace{-.3cm}
	\label{fig:latticeQuotient4}
\end{figure}

However, as observed by N.~Reading in~\cite{Reading-HopfAlgebras}, ``\emph{this theorem gives no means of knowing when~$\Fan_\equiv$ is the normal fan of a polytope}''. For the above-mentioned examples of lattice congruences, this problem was settled by specific constructions of polytopes realizing the quotient fan~$\Fan_\equiv$: J.-L.~Loday's associahedron~\cite{Loday} for the Tamari lattice, C.~Hohlweg and C.~Lange's associahedra~\cite{HohlwegLange, LangePilaud} for the Cambrian lattices, cubes for the boolean lattices, permutreehedra~\cite{PilaudPons-permutrees} for the permutree lattices, brick polytopes~\cite{PilaudSantos-brickPolytope} for increasing flip lattices on acyclic twists, Minkowski sums of opposite associahedra for rotation lattices on diagonal rectangulations~\cite{LawReading}, etc. Although these realizations have similarities, each requires an independent construction and proof. In particular, the intersection of the half-spaces defining facets of the classical permutahedron normal to the rays of~$\Fan_\equiv$ does not realize~$\Fan_\equiv$ in general, in contrast to the specific situation of~\cite{Loday, HohlwegLange, LangePilaud, PilaudPons-permutrees}. Our contribution is to provide a general method to construct a polytope~$P_\equiv$ whose normal fan is the quotient fan~$\Fan_\equiv$. We therefore prove the following statement.

\begin{theorem}
\label{thm:quotientopes}
For any lattice congruence~$\equiv$ of the weak order on~$\fS_n$, the fan~$\Fan_\equiv$ obtained by glueing the braid fan according to the congruence classes of~${\equiv}$ is the normal fan of a polytope. In particular, the graph of this polytope is the Hasse diagram of the quotient of the weak order by~${\equiv}$.
\end{theorem}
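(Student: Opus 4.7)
\emph{Proof strategy.} The plan is to construct $P_\equiv$ directly as a bounded intersection of halfspaces, one per \emph{shard} in the sense of N.~Reading, with carefully chosen right-hand sides. The key combinatorial object is the set $\shards_\equiv$ of shards retained by~$\equiv$: each $\shard \in \shards_\equiv$ is a connected piece of some braid hyperplane~$\Hyp{ij}$ obtained by cutting it along the lower-dimensional walls it meets, and Reading showed that the walls of~$\Fan_\equiv$ are precisely unions of such shards. To each shard $\shard \subseteq \Hyp{ij}$ one attaches a canonical unit normal $n_\shard$ proportional to $e_j - e_i$, oriented from the descent side to the ascent side of the corresponding covering relations.

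Next, I would fix a strictly positive weight function $w : \shards_\equiv \to \R_{>0}$ and declare the right-hand side $h(\shard)$ of the halfspace $\langle n_\shard, x \rangle \le h(\shard)$ to be a judiciously chosen combination of the weights of all shards preceding~$\shard$ in Reading's forcing order on shards. The candidate polytope is then
\[
P_\equiv \eqdef \bigset{x \in \R^n}{\textstyle \sum_i x_i = \binom{n+1}{2} \text{ and } \langle n_\shard, x\rangle \le h(\shard) \text{ for all } \shard \in \shards_\equiv}.
\]
To identify vertices, I would associate to each congruence class~$X$ a candidate point $v(X) \in \R^n$ obtained by saturating exactly the inequalities coming from the shards bordering the maximal cone $C(X)$ of~$\Fan_\equiv$ indexed by~$X$, and verify that (a) these saturated hyperplanes intersect in a single point lying in~$C(X)$ and (b) $v(X)$ satisfies all other inequalities of $P_\equiv$ strictly. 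Combined with Theorem~\ref{thm:fanQuotient}, this identifies the normal fan of $P_\equiv$ with~$\Fan_\equiv$.

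The crux of the proof lies in the wall-crossing inequalities. Whenever two adjacent maximal cones $C(X)$ and $C(Y)$ of $\Fan_\equiv$ share a wall supported by a shard~$\shard$, one needs $v(X) - v(Y)$ to be a strictly positive multiple of $n_\shard$; moreover, the elementary displacements $v(X) - v(Y)$ must sum to zero around every codimension-$2$ face of $\Fan_\equiv$ (a cocycle/compatibility condition on the $h$-values). The main obstacle is choosing~$h$ so that all these local conditions hold simultaneously, and this is where the combinatorics of the forcing order on shards will have to do the heavy lifting. I would first try defining $h(\shard)$ recursively along the forcing order, but more robustly one could hope to exhibit $P_\equiv$ as a Minkowski sum $\sum_{\shard \in \shards_\equiv} w(\shard) \, \simplex_\shard$ of elementary polytopes indexed by shards, in which case the wall-crossing and cocycle conditions reduce to the additivity of normal fans under Minkowski summation, and the whole problem collapses to choosing the right elementary summand~$\simplex_\shard$ attached to each shard.
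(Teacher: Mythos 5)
There is a genuine gap, and it starts at the very first step: your candidate halfspace description
$\bigset{x}{\dotprod{n_\shard}{x} \le h(\shard) \text{ for } \shard \in \shards_\equiv}$ with $n_\shard$ proportional to $e_j-e_i$ confuses the two sides of the normal-fan duality. Facets of a polytope whose normal fan is $\Fan_\equiv$ correspond to \emph{rays} of $\Fan_\equiv$, and their outer normals are those rays; the shards support the \emph{walls} (codimension-one cones) of $\Fan_\equiv$, which correspond to \emph{edges} of the polytope, so $e_j-e_i$ is an edge direction, not a facet normal. Concretely, for $n=3$ and the sylvester congruence, $\shards_\equiv$ has three shards, so your description cuts out at most a triangle in the $2$-plane $\sum x_i = \mathrm{const}$, whereas the quotient fan is the normal fan of a pentagon; and even for the full congruence the facet normals of $\Perm[3]$ are weights $\ray(R)$ (one per proper subset $R\subsetneq[n]$), not roots. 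The correct H-description must have one inequality per ray of the braid fan, i.e.\ be of the form $\dotprod{\ray(R)}{x}\le h(R)$ for $\varnothing\ne R\subsetneq[n]$, and the whole problem is to choose the heights $h(R)$. Your later vertex-based reformulation (adjacent vertices $v(X)-v(Y)$ positively proportional to $e_j-e_i$, plus a cocycle condition around codimension-two cones) is a valid polytopality criterion, but you never produce the data that makes it hold: "define $h$ recursively along the forcing order" is exactly the step where all the content lies, and it is not carried out.

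For comparison, the paper fixes the facet normals to be all rays $\ray(R)$ of the braid fan and defines the height of $R$ as $h^f_\equiv(R)=\sum_{\shard\in\shards_\equiv} f(\shard)\,\gamma(\shard,R)$, where $\gamma(\shard,R)\in\{0,1\}$ is an explicit combinatorial contribution and $f$ is \emph{forcing dominant} ($f(\shard)>\sum_{\shard'\prec\shard}f(\shard')$). The wall-crossing condition then becomes the submodular-type inequality $h(R)+h(R')\ge h(R\cap R')+h(R\cup R')$ for the specific pairs $R,R'$ arising from adjacent chambers, with equality exactly when the separating wall is not cut by a shard of $\shards_\equiv$; this is verified by a telescoping argument in which all shards not forced by the "special" shard of the wall contribute equally to both sides, and forcing dominance controls the rest. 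Your closing suggestion of a Minkowski sum $\sum_\shard w(\shard)\,\simplex_\shard$ is a genuinely viable alternative route (it was realized later in the literature via so-called shard polytopes), but identifying the correct elementary summand $\simplex_\shard$ attached to a shard is itself a nontrivial construction that your proposal leaves entirely unspecified, so it cannot be counted as a proof.
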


We call \defn{quotientopes} the resulting polytopes. Examples are illustrated in Figures~\ref{fig:permAssoCube}, \ref{fig:relevantQuotientopes} and~\ref{fig:quotientopeLattice}.


\section{Background}

\subsection{Polyhedral geometry}

We briefly recall basic definitions and properties of polyhedral fans and polytopes, and refer to~\cite{Ziegler-polytopes} for a classical textbook on this topic.

A hyperplane~$H \subset \R^d$ is a \defn{supporting hyperplane} of a set~$X \subset \R^d$ if~$H \cap X \ne \varnothing$ and~$X$ is contained in one of the two closed half-spaces of~$\R^d$ defined by~$H$.

We denote by~$\R_{\ge0}\rays \eqdef \set{\sum_{\ray \in \rays} \lambda_{\ray} \, \ray}{\lambda_{\ray} \in \R_{\ge0}}$ the \defn{positive span} of a set~$\rays$ of vectors of~$\R^d$.
A \defn{polyhedral cone} is a subset of~$\R^d$ defined equivalently as the positive span of finitely many vectors or as the intersection of finitely many closed linear halfspaces.
The \defn{faces} of a cone~$C$ are the intersections of~$C$ with the supporting hyperplanes of~$C$.
The $1$-dimensional (resp.~codimension~$1$) faces of~$C$ are called~\defn{rays} (resp.~\defn{facets}) of~$C$.
A cone is \defn{simplicial} if it is generated by a set of independent vectors.

A \defn{polyhedral fan} is a collection~$\Fan$ of polyhedral cones such that
\begin{itemize}
\item if~$C \in \Fan$ and~$F$ is a face of~$C$, then~$F \in \Fan$,
\item the intersection of any two cones of~$\Fan$ is a face of both.
\end{itemize}
A fan is \defn{simplicial} if all its cones are, and \defn{complete} if the union of its cones covers the ambient space~$\R^d$.
For two fans~$\Fan, \Fan[G]$ in~$\R^d$, we say that~$\Fan$ \defn{refines}~$\Fan[G]$ (and that~$\Fan[G]$ \defn{coarsens}~$\Fan$) if every cone of~$\Fan$ is contained in a cone of~$\Fan[G]$.

A \defn{polytope} is a subset~$P$ of~$\R^d$ defined equivalently as the convex hull of finitely many points or as a bounded intersection of finitely many closed affine halfspaces.
The \defn{dimension}~$\dim(P)$ is the dimension of the affine hull of~$P$.
The \defn{faces} of~$P$ are the intersections of~$P$ with its supporting hyperplanes.
The dimension~$0$ (resp.~dimension~$1$, resp.~codimension~$1$) faces are called \defn{vertices} (resp.~\defn{edges}, resp.~\defn{facets}) of~$P$.
A polytope is \defn{simple} if its supporting hyperplanes are in general position, meaning that each vertex is incident to $\dim(P)$ facets (or equivalently to $\dim(P)$ edges).

The (outer) \defn{normal cone} of a face~$F$ of~$P$ is the cone generated by the outer normal vectors of the facets of~$P$ containing~$F$.
In other words, it is the cone of vectors~$\ray[c]$ such that the linear form~${\b{x} \mapsto \dotprod{\ray[c]}{\b{x}}}$ on~$P$ is maximized by all points of the face~$F$.
The (outer) \defn{normal fan} of~$P$ is the collection of the (outer) normal cones of all its faces.
We say that a complete polyhedral fan in~$\R^d$ is \defn{polytopal} when it is the normal fan of a polytope of~$\R^d$.
A classical characterization of polytopality of complete simplicial fans can be obtained as a reformulation of regularity of triangulations of vector configurations, as introduced in the theory of secondary polytopes~\cite[Chap.~7]{GelfandKapranovZelevinsky}, see also~\cite[Chap.~5]{DeLoeraRambauSantos}.
Here, we present a reformulation of this characterization to deal with (not necessarily simplicial) fans that coarsen a complete simplicial fan.

\begin{proposition}
\label{prop:polytopalSubfanFan}
Consider two fans~$\Fan, \Fan[G]$ of~$\R^d$, and let~$\rays \subset \R^d$ be a set of representative vectors for the rays of~$\Fan$. Assume that~$\Fan$ is complete and simplicial, and that~$\Fan$ refines~$\Fan[G]$. Then the following assertions are equivalent:
\begin{enumerate}
\item $\Fan[G]$ is the normal fan of a polytope in~$\R^d$.
\item There exists a map~$h: \rays \to \R_{>0}$ with the property that for any~$\ray, \ray' \in \rays$ and~$\rays[S] \subset \rays$ for which~$C \eqdef \R_{\ge0} (\rays[S] \cup \{\ray\})$ and~$C' \eqdef \R_{\ge0} (\rays[S] \cup \{\ray'\})$ are two adjacent maximal cones of~$\Fan$, if 
\[
\alpha \, \ray + \alpha' \, \ray' + \sum_{\ray[s] \in \rays[S]} \beta_{\ray[s]} \, \ray[s] = 0
\]
is the unique (up to rescaling) linear dependence with~$\alpha, \alpha' > 0$ among~${\{\ray, \ray'\} \cup \rays[S]}$ then
\[
\alpha \, h(\ray) + \alpha' \, h(\ray') + \sum_{\ray[s] \in \rays[S]} \beta_{\ray[s]} \, h(\ray[s]) \ge 0,
\]
with equality if and only if the cones~$C$ and~$C'$ are contained in the same cone of~$\Fan[G]$.
\end{enumerate}
Under these conditions, $\Fan[G]$ is the normal fan of the polytope defined by
\[
\bigset{\b{x} \in \R^d}{\dotprod{\ray}{\b{x}} \le h(\ray) \text{ for all } \ray \in \rays}.
\]
\end{proposition}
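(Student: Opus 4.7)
The plan is to adapt the classical regularity criterion for complete simplicial fans to the coarsening~$\Fan[G]$. Given $h: \rays \to \R_{>0}$, on each maximal cone~$C$ of~$\Fan$ let~$\ell_C$ denote the unique linear functional on~$\R^d$ satisfying $\ell_C(\ray) = h(\ray)$ for every ray~$\ray$ of~$C$; this is well defined because~$\Fan$ is simplicial. Two adjacent maximal cones~$C$ and~$C'$ share the rays of their common wall, so $\ell_C$ and $\ell_{C'}$ agree on that wall and the pieces glue into a continuous piecewise linear function~$\phi_h: \R^d \to \R$. The candidate polytope is
\[
P \eqdef \bigset{\b{x} \in \R^d}{\dotprod{\ray}{\b{x}} \le h(\ray) \text{ for all } \ray \in \rays},
\]
and the entire proof reduces to showing that~(2) is equivalent to~$\phi_h$ being convex on~$\R^d$ with maximal linearity domains exactly the maximal cones of~$\Fan[G]$; under that condition, $\phi_h$ is the support function of~$P$ and the normal fan of~$P$ is~$\Fan[G]$.

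The key reformulation is a short computation. Plugging the linear dependence $\alpha\ray + \alpha'\ray' + \sum_{\ray[s] \in \rays[S]} \beta_{\ray[s]} \ray[s] = 0$ into~$\ell_C$ and using $\ell_C(\ray[s]) = h(\ray[s]) = \ell_{C'}(\ray[s])$ for $\ray[s] \in \rays[S]$ yields
\[
\alpha h(\ray) + \alpha' h(\ray') + \sum_{\ray[s] \in \rays[S]} \beta_{\ray[s]} h(\ray[s]) = \alpha' \bigl( h(\ray') - \ell_C(\ray') \bigr).
\]
Since~$\alpha' > 0$, the inequality in~(2) amounts to $\ell_C(\ray') \le h(\ray')$, with equality iff $\ell_C = \ell_{C'}$ (using that $\{\ray'\} \cup \rays[S]$ is a basis of~$\R^d$). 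This is precisely local convexity of~$\phi_h$ across the wall between~$C$ and~$C'$, strict exactly when the wall is preserved in~$\Fan[G]$.

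For the implication $(2) \implies (1)$, I would invoke the standard fact that local convexity across every wall of a complete polyhedral fan forces a continuous piecewise linear function to be globally convex, so that $\phi_h = \max_C \ell_C$ on~$\R^d$. In particular $\ell_C(\ray) \le h(\ray)$ for every $\ray \in \rays$, so the point $a_C$ representing~$\ell_C$ lies in~$P$, and it is a vertex of~$P$ whose normal cone contains~$C$. The equality case of~(2) identifies $a_C = a_{C'}$ with $C,C'$ lying in the same cone of~$\Fan[G]$, whence the normal fan of~$P$ equals~$\Fan[G]$. Conversely, for $(1) \implies (2)$, setting $h(\ray) \eqdef \max_{\b{x} \in P} \dotprod{\ray}{\b{x}}$ makes~$\phi_h$ coincide with the support function of~$P$ on the cones of~$\Fan$: each maximal cone~$C$ of~$\Fan$ is contained in the normal cone of some vertex~$v_X$ of~$P$, $a_C = v_X$, and the inequality reduces to the defining inequality $\langle v_X, \ray' \rangle \le h(\ray')$ of~$P$.

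The step I expect to demand the most care is the bookkeeping of the equality case on both sides. In $(2) \implies (1)$, I must check that the ``fake walls'' on which the wall-crossing inequality becomes an equality do not separate distinct vertices of~$P$, so that the normal fan of~$P$ genuinely coarsens~$\Fan$ down to~$\Fan[G]$ and no further. In $(1) \implies (2)$, I must verify that equality $\ell_C(\ray') = h(\ray')$ occurs exactly when $C$ and $C'$ lie in the same cone of~$\Fan[G]$. Both arguments ultimately rest on the orthogonality of $a_C - a_{C'}$ to the wall combined with the transversality of the direction~$\ray'$, which delivers the equivalence $\ell_C = \ell_{C'} \iff a_C = a_{C'}$ threading the entire proof.
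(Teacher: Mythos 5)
Your proposal is correct and follows essentially the same route as the paper: your functionals~$\ell_C$ are dual to the paper's points~$\b{v}_C$, your wall-crossing computation reducing condition~(2) to $\ell_C(\ray') \le h(\ray')$ is exactly the paper's manipulation of the linear dependence, and the local-to-global convexity step you cite as standard is precisely what the paper proves by hand via a generic line segment crossing the chain of cones $C_1,\dots,C_k$. The only difference is one of language (support functions \emph{versus} explicit vertices and inner products), not of substance.
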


\begin{proof}
The proof is similar to that of \cite[Lem.~2.1]{ChapotonFominZelevinsky}, and we just adapt it here for the convenience of the reader.
Assume first that~$\Fan[G]$ is the normal fan of a polytope~$P \subseteq \R^d$ and define~$h : \rays \to \R_{>0}$ by~$h(\ray) \eqdef \max \set{\dotprod{\ray}{\b{x}}}{\b{x} \in P}$.
Consider~$\ray, \ray' \in \rays$ and~$\rays[S] \subset \rays$ such that the cones~$C \eqdef \R_{\ge0} (\rays[S] \cup \{\ray\})$ and~$C' \eqdef \R_{\ge0} (\rays[S] \cup \{\ray'\})$ are two adjacent maximal cones of~$\Fan$. Let~$\b{v}$ and~$\b{v}'$ be the vertices of~$P$ whose normal cones contain~$C$ and~$C'$ respectively. Then by definition,
\[
h(\ray) = \dotprod{\ray}{\b{v}},
\qquad
h(\ray') = \dotprod{\ray'}{\b{v}'}
\qquad\text{and}\qquad
h(\ray[s]) = \dotprod{\ray[s]}{\b{v}} = \dotprod{\ray[s]}{\b{v}'}
\quad\text{for all~$\ray[s] \in \rays[S]$.}
\]
Therefore, applying the linear form~$\ray[s] \mapsto \dotprod{\ray[s]}{\b{v}}$ to the linear dependance
\(
\alpha \, \ray + \alpha' \, \ray' + \sum \beta_{\ray[s]} \, \ray[s] = 0
\)
among the vectors of~$\rays[S] \cup \{\ray, \ray'\}$, we obtain
\(
\alpha \dotprod{\ray}{\b{v}} + \alpha' \dotprod{\ray'}{\b{v}} + \sum \beta_{\ray[s]} \dotprod{\ray[s]}{\b{v}} = 0.
\)
If~$C$ and~$C'$ belong to the same cone of~$\Fan[G]$, then~$\b{v} = \b{v}'$ and~$h(\ray') = \dotprod{\ray'}{\b{v}}$, thus we obtain the equality
\(
\alpha \, h(\ray) + \alpha' \, h(\ray') + \sum \beta_{\ray[s]} \, h(\ray[s]) = 0.
\)
Otherwise, we have~$h(\ray') = \dotprod{\ray'}{\b{v}'} > \dotprod{\ray'}{\b{v}}$ and since~$\alpha' > 0$ we obtain the inequality
\(
\alpha \, h(\ray) + \alpha' \, h(\ray') + \sum \beta_{\ray[s]} \, h(\ray[s]) > 0.
\)

Reciprocally, assume that there exists a height function~$h : \rays \to \R_{>0}$ such that \mbox{Condition~(2)} is satisfied and consider the polytope~$P \eqdef \set{\b{x} \in \R^d}{\dotprod{\ray}{\b{x}} \le h(\ray) \text{ for all } \ray \in \rays}$.
For each maximal cone ${C = \R_{\ge 0} \rays[S]}$ of~$\Fan[F]$, let~$\b{v}_C$ be the intersection of the hyperplanes~$\set{\b{x} \in \R^d}{\dotprod{\ray[s]}{\b{x}} = h(\ray[s])}$ for~$\ray[s] \in \rays[S]$.
Let~$\ray, \ray' \in \rays$ and~$\rays[S] \subset \rays$ for which~$C \eqdef \R_{\ge0} (\rays[S] \cup \{\ray\})$ and~${C' \eqdef \R_{\ge0} (\rays[S] \cup \{\ray'\})}$ are two adjacent maximal cones of~$\Fan$. 
Condition~(2) implies that~$\dotprod{\ray}{\b{v}_{C}} \ge \dotprod{\ray}{\b{v}_{C'}}$ with equality if and only if~$C$ and~$C'$ are contained in the same cone of~$\Fan[G]$.
Therefore, for any vector~$\ray[c] \in \R^d$ located on the side of~$\ray$ of the hyperplane spanned by~$\rays[S]$, we can write~$\ray[c]$ as a linear combination~$\ray[c] = \gamma_{\ray} \, \ray + \sum_{\ray[s] \in \rays[S]} \gamma_{\ray[s]} \, \ray[s]$ with~$\gamma_{\ray} > 0$, and we obtain that
\[
\dotprod{\ray[c]}{\b{v}_C} = \gamma_{\ray} \dotprod{\ray}{\b{v}_C} + \sum_{\ray[s] \in \rays[S]} \gamma_{\ray[s]} \dotprod{\ray[s]}{\b{v}_C} \ge \gamma_{\ray} \dotprod{\ray}{\b{v}_{C'}} + \sum_{\ray[s] \in \rays[S]} \gamma_{\ray[s]} \dotprod{\ray[s]}{\b{v}_{C'}} = \dotprod{\ray[c]}{\b{v}_{C'}},
\]
with equality if and only if~$C$ and~$C'$ are contained in the same cone of~$\Fan[G]$.
Consider now a vector~$\ray[c] \in \R^d$ and a maximal cone~$C$ of~$\Fan[F]$.
For dimension reason, there exists a line segment~$L$ joining~$\ray[c]$ with some interior point of~$C$ and not passing through any cone of codimension two or more in~$\Fan[F]$.
Let~$C_1, C_2, \dots, C_k = C$ denote the cones of~$\Fan[F]$ along~$L$ (with~$\ray[c] \in C_1$).
By the previous observation, we obtain
\[
\dotprod{\ray[c]}{\b{v}_{C_1}} \ge \dotprod{\ray[c]}{\b{v}_{C_2}} \ge \dots \ge \dotprod{\ray[c]}{\b{v}_C},
\]
with equalities if and only if~$C_i$ and~$C_{i+1}$ are contained in the same cone of~$\Fan[G]$.
Therefore, the linear form~$\b{x} \mapsto \dotprod{\ray[c]}{\b{x}}$ on~$P$ is maximized by~$\b{v}_C$ if and only if~$\ray[c]$ belongs to the cone of~$\Fan[G]$ containing the cone~$C$.
We conclude that for any maximal cone~$C$ of~$\Fan$, the point~$\b{v}_C$ is a vertex of~$P$ whose normal cone is the cone of~$\Fan[G]$ containing~$C$, which shows that~$\Fan[G]$ is the normal fan of~$P$.
\end{proof}

\subsection{Braid fan}

We consider the \defn{braid arrangement}~$\HA_n \eqdef \set{\Hyp{ij}}{1 \le i < j \le n}$ consisting of the hyperplanes of the form~$\Hyp{ij} \eqdef \set{\b{x} \in \R^n}{x_i = x_j}$.
The closures of the connected components of~$\R^n \ssm \bigcup \HA_n$ (together with all their faces) form a fan.
This fan is complete and simplicial, but not essential (all its cones contain the line~$\R \one \eqdef \R(1,1,\dots,1)$). 
We call \defn{braid fan}, and denote by~$\Fan_n$, the intersection of this fan with the hyperplane~$\hyp \eqdef \bigset{\b{x} \in \R^n}{\sum_{i \in [n]} x_i = 0}$.

For example, we have represented in Figures~\ref{fig:shards3}\,(left) and~\ref{fig:weakOrder4}\,(middle) the braid fan when~$n = 3$ and~$n = 4$ respectively.
As the $3$-dimensional fan~$\Fan_4$ is difficult to visualize in \fref{fig:weakOrder4}\,(middle), we also use another classical representation in \fref{fig:shards4}\,(left): we intersect~$\Fan_4$ with a unit sphere and we stereographically project the resulting arrangement of great circles from the pole~$4321$ to the plane.
Each circle then corresponds to a hyperplane~$x_i = x_j$ with~$i < j$, separating a disk where~$x_i < x_j$ from an unbounded region where~$x_i > x_j$.

The cones of the braid fan~$\Fan_n$ are naturally labeled by ordered partitions of~$[n]$: an ordered partition~$\pi = \pi_1 | \pi_2 | \dots | \pi_k$ of~$[n]$ into $k$ parts corresponds to the $(k-1)$-dimensional cone
\(
C(\pi) \eqdef \set{\b{x} \in \hyp}{x_u \le x_v \text{ for all $i \le j$, $u \in \pi_i$ and~$v \in \pi_j$}}.
\)
In particular, the fan $\Fan_n$ has 
\begin{itemize}
\item a maximal cone~$C(\sigma) \eqdef \! \set{\b{x} \in \hyp}{x_{\sigma(1)} \! \le \! x_{\sigma(2)} \! \le \! \dots \le x_{\sigma(n)} \! }$ for each permutation~${\sigma \in \fS_n}$, 
\item a ray~$C(R)$ for each subset~$R$ of~$[n]$ distinct from~$\varnothing$ and~$[n]$. Namely, if~$R = \{r_1, \dots, r_p\}$ and~$[n] \ssm R = \{s_1, \dots, s_{n-p}\}$ then~$C(R) \eqdef \set{\b{x} \in \hyp}{x_{r_1} = \dots = x_{r_p} \le x_{s_1} = \dots = x_{s_{n-p}}}$.
\end{itemize}
This is illustrated in Figures~\ref{fig:shards3}\,(left) and~\ref{fig:shards4}\,(left) when~$n = 3$ and~$n = 4$ respectively: chambers are labeled with blue permutations of~$[n]$ and rays are labeled with red subsets of~$[n]$.

Note that the fundamental chamber~$C(\identity)$ has rays labeled by the $n-1$ subsets of the form~$[k]$ with~${0 < k < n}$.
Any other chamber~$C(\sigma)$ is obtained from~$C(\identity)$ by permutation of coordinates and has thus rays labeled by~$\sigma([k])$ with~$0 < k < n$.
For example, the chamber~$C(312)$ of~$\Fan_3$ has rays labeled by~$\{3\}$ and~$\{3,1\}$, and the chamber~$C(3421)$ of~$\Fan_4$ has rays labeled by~$\{3\}$, $\{3,4\}$ and~$\{2,3,4\}$, see Figures~\ref{fig:shards3}\,(left) and~\ref{fig:shards4}\,(left).
Two permutations~$\sigma, \sigma'$ are said to be \defn{adjacent} when their cones~$C(\sigma)$ and~$C(\sigma')$ share a facet, or equivalently when~$\sigma$ and~$\sigma'$ differ by the exchange of two consecutive positions.

To understand the geometry of~$\Fan_n$, we need to choose convenient representative vectors in~$\hyp$ for the rays of~$\Fan_n$. 
We denote by~$\Delta \eqdef \{\alpha_1, \dots, \alpha_{n-1}\}$ the root basis (where~${\alpha_i \eqdef \b{e}_{i+1} - \b{e}_i}$) and by~$\nabla \eqdef \{\omega_1, \dots, \omega_{n-1}\}$ the fundamental weight basis (\ie the dual basis of the root basis~$\Delta$).
A subset~${\varnothing \ne R \subsetneq [n]}$ corresponds to the ray~$\ray(R)$ of~$\Fan_n$ whose $k$th coordinate in the fundamental weight basis is $\one_{k \in R} - \one_{k+1 \in R}$ (where~$\one_X = 1$ if the property~$X$ holds and~$0$ otherwise). 
For example, the rays of the fundamental chamber are the rays~$\ray([k]) = \omega_k$ for~$0 < k < n$. 

\begin{lemma}
\label{lem:linearDependence}
Let~$\sigma, \sigma'$ be two adjacent permutations. Let~$\varnothing \ne R \subsetneq [n]$ (resp.~$\varnothing \ne R' \subsetneq [n]$) be such that~$\ray(R)$ (resp.~$\ray(R')$) is the ray of~$C(\sigma)$ not in~$C(\sigma')$ (resp.~of~$C(\sigma')$ not in~$C(\sigma)$). Then the linear dependence among the rays of the cones~$C(\sigma)$ and~$C(\sigma')$ is given by
\[
\ray(R) \, + \, \ray(R') \; = \; \ray(R \cap R') \, + \, \ray(R \cup R')
\]
where we set~$\ray(\varnothing) = \ray([n]) = 0$ by convention.
\end{lemma}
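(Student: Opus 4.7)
The plan is to reduce the lemma to a direct coordinate computation, after first identifying $R$ and $R'$ explicitly from the adjacency data.

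First I would unpack what ``adjacent'' means: $\sigma$ and $\sigma'$ agree outside two consecutive positions $i,i+1$, where they exchange two letters $a \ne b$. The rays of $C(\sigma)$ are $\ray(\sigma([k]))$ for $k \in [n-1]$ and likewise for $\sigma'$, and the identity $\sigma([k]) = \sigma'([k])$ holds for every $k \ne i$. Hence $R = \sigma([i])$ and $R' = \sigma'([i])$ differ only in whether they contain $a$ or $b$, which gives
\[
R \cap R' \;=\; \sigma([i-1]), \qquad R \cup R' \;=\; \sigma([i+1]).
\]
This already identifies the four sets appearing in the claimed relation and, via the convention $\ray(\varnothing) = \ray([n]) = 0$, covers the boundary cases $i=1$ and $i=n-1$.

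The next step is the identity itself. Using the description of $\ray(R)$ in the fundamental weight basis, the $k$-th coordinate of $\ray(R)+\ray(R')$ is
\[
\bigl(\one_{k \in R} - \one_{k+1 \in R}\bigr) + \bigl(\one_{k \in R'} - \one_{k+1 \in R'}\bigr),
\]
while the $k$-th coordinate of $\ray(R\cap R')+\ray(R\cup R')$ is the analogous expression with $R,R'$ replaced by $R\cap R', R\cup R'$. The whole identity then reduces to the elementary boolean fact that
\[
\one_{j\in R} + \one_{j\in R'} \;=\; \one_{j\in R\cap R'} + \one_{j\in R\cup R'}
\]
for every $j \in [n]$ (both sides just count the number of the two sets that contain $j$). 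Applied with $j=k$ and $j=k+1$ and subtracting gives the desired equality of coordinates, so the claimed linear relation holds.

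Finally, for uniqueness, I would observe that $C(\sigma)$ and $C(\sigma')$ are two maximal cones of the simplicial fan $\Fan_n$ (living in the $(n-1)$-dimensional hyperplane $\hyp$) sharing a facet, so together they have $(n-2) + 2 = n$ distinct rays among $\hyp$. Any $n$ vectors in an $(n-1)$-dimensional space have a one-dimensional space of linear dependences, so the relation above is unique up to rescaling. There is no real obstacle here; the only mild subtlety is checking that the degenerate cases (when one of $R\cap R'$ or $R\cup R'$ is $\varnothing$ or $[n]$) are consistent with the convention $\ray(\varnothing)=\ray([n])=0$, which the coordinate formula handles automatically.
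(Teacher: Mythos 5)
Your proposal is correct and follows essentially the same route as the paper: identify $R=\sigma([i])$, $R'=\sigma'([i])$, $R\cap R'=\sigma([i-1])$, $R\cup R'=\sigma([i+1])$, verify the relation coordinatewise in the fundamental weight basis via the boolean identity $\one_{j\in R}+\one_{j\in R'}=\one_{j\in R\cap R'}+\one_{j\in R\cup R'}$, and conclude uniqueness from the configuration of the $n$ rays of the two adjacent cones. The only nuance is that your dimension count for uniqueness should note that the $n$ vectors span $\hyp$ (which holds since the $n-1$ rays of the simplicial cone $C(\sigma)$ are already independent), so the dependence space is exactly, not merely at least, one-dimensional.
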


\begin{proof}
Since~$\sigma$ and~$\sigma'$ are adjacent permutations of~$\fS_n$, there is~$i \in [n-1]$ such that~${\sigma(i) = \sigma'(i+1)}$ and~$\sigma(i+1) = \sigma'(i)$ while~$\sigma(\ell) = \sigma'(\ell)$ for any~$\ell \notin \{i, i+1\}$.
Let~$R \eqdef \sigma([i])$ and~$R' \eqdef \sigma'([i])$ and observe that~$R \cap R' = \sigma([i-1]) = \sigma'([i-1])$ and~$R \cup R' = \sigma([i+1]) = \sigma'([i+1])$.
Since~$\one_{k \in R} + \one_{k \in R'} = \one_{k \in R \cap R'} + \one_{k \in R \cup R'}$ for any~$k \in [n]$ and~$\ray(R) = \sum_{k \in [n-1]} (\one_{k \in R} - \one_{k+1 \in R}) \omega_k$, we obtain the linear dependence~$\ray(R) \, + \, \ray(R') \; = \; \ray(R \cap R') \, + \, \ray(R \cup R')$, where we use the convention~$\ray(\varnothing) = 0$ if~$R \cap R' = \varnothing$ and~$\ray([n]) = 0$ if~$R \cup R' = [n]$.
Moreover, we have~${\ray(R) \in C(\sigma) \ssm C(\sigma')}$ and~${\ray(R') \in C(\sigma') \ssm C(\sigma)}$ while~${\ray(R \cap R') \in C(\sigma) \cap C(\sigma')}$ and~${\ray(R \cup R') \in C(\sigma) \cap C(\sigma')}$.
Therefore, we have identified the unique (up to rescaling) linear dependence of the rays of the cones~$C(\sigma)$ and~$C(\sigma')$.
\end{proof}

For example, the linear dependence among the rays in the adjacent cones~$C(123)$ and~$C(213)$ of~$\Fan_3$ is~$\ray(\{1\}) + \ray(\{2\}) = \ray(\{12\})$, while the linear dependence among the rays in the adjacent cones~$C(123)$ and~$C(132)$ of~$\Fan_3$ is~$\ray(\{1,2\}) + \ray(\{1,3\}) = \ray(\{1\})$. See \fref{fig:shards3}\,(left).
The first non-degenerate linear dependencies (\ie where~$R \cap R' \ne \varnothing$ and~$R \cup R' \ne [n]$) arise in~$\Fan_4$: for instance, the linear dependence among the rays in the adjacent cones~$C(4132)$ and~$C(4312)$ of~$\Fan_4$ is~$\ray(\{3,4\}) + \ray(\{1,4\}) = \ray(\{4\}) + \ray(\{1,3,4\})$. See \fref{fig:shards4}\,(left).

\subsection{Shards}

\begin{figure}
	\capstart
	\centerline{\includegraphics[scale=.9]{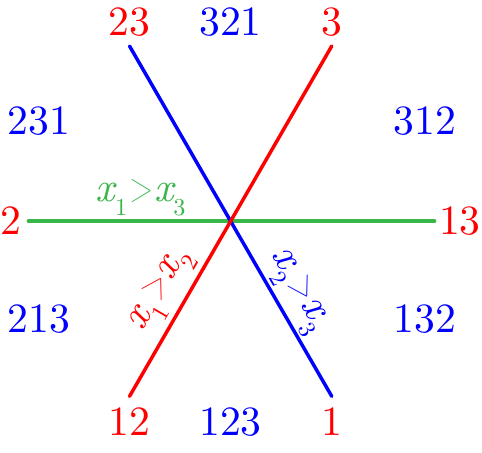} \qquad \includegraphics[scale=.9]{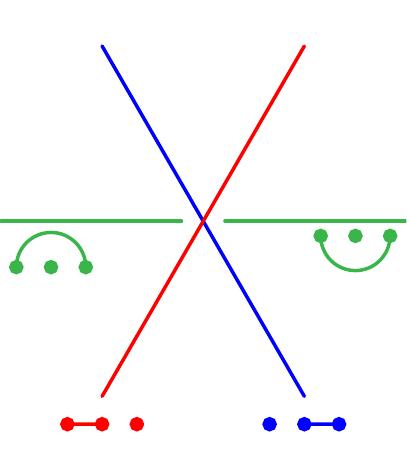} \qquad \includegraphics[scale=.9]{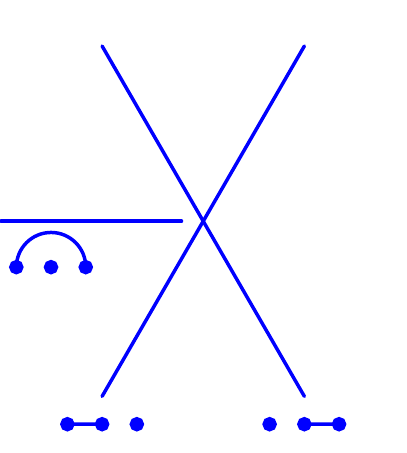}}
	\caption{The braid fan~$\Fan_3$ (left), the corresponding shards (middle), and the quotient fan given by the sylvester congruence~$\equiv^\textrm{sylv}$~(right).}
	\label{fig:shards3}
\end{figure}

\begin{figure}
	\capstart
	\centerline{\includegraphics[scale=.5]{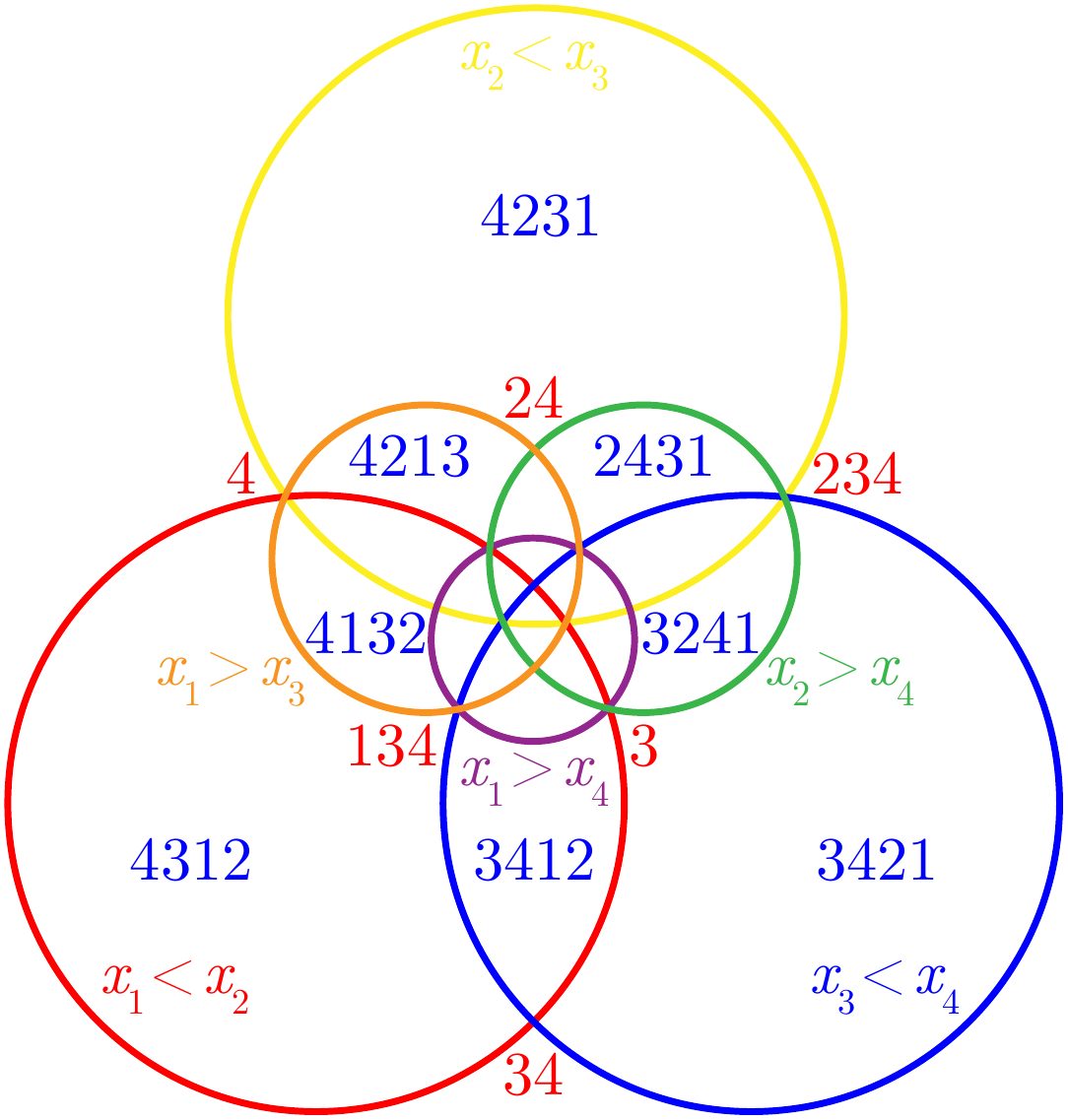} \; \includegraphics[scale=.5]{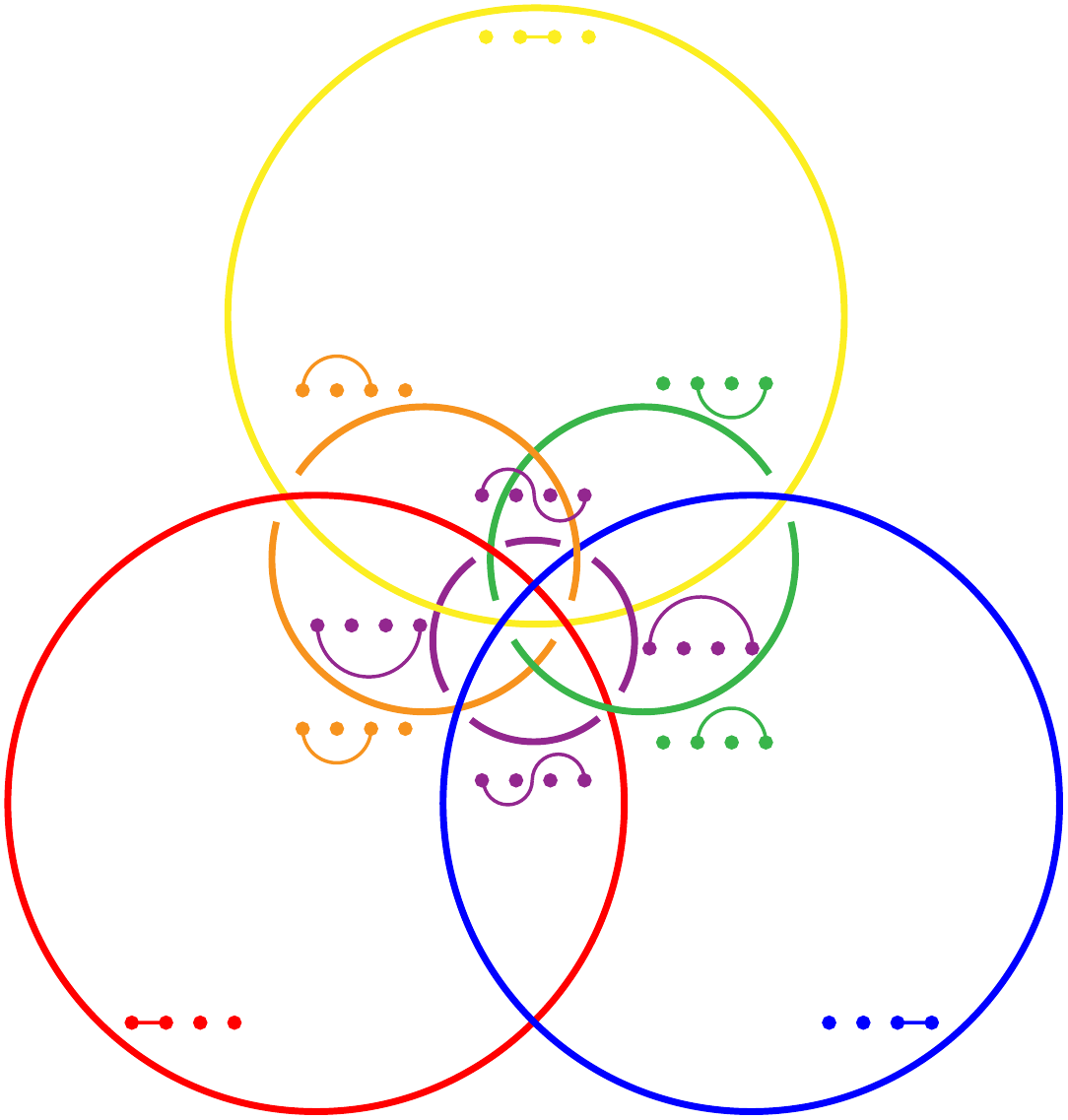} \; \includegraphics[scale=.5]{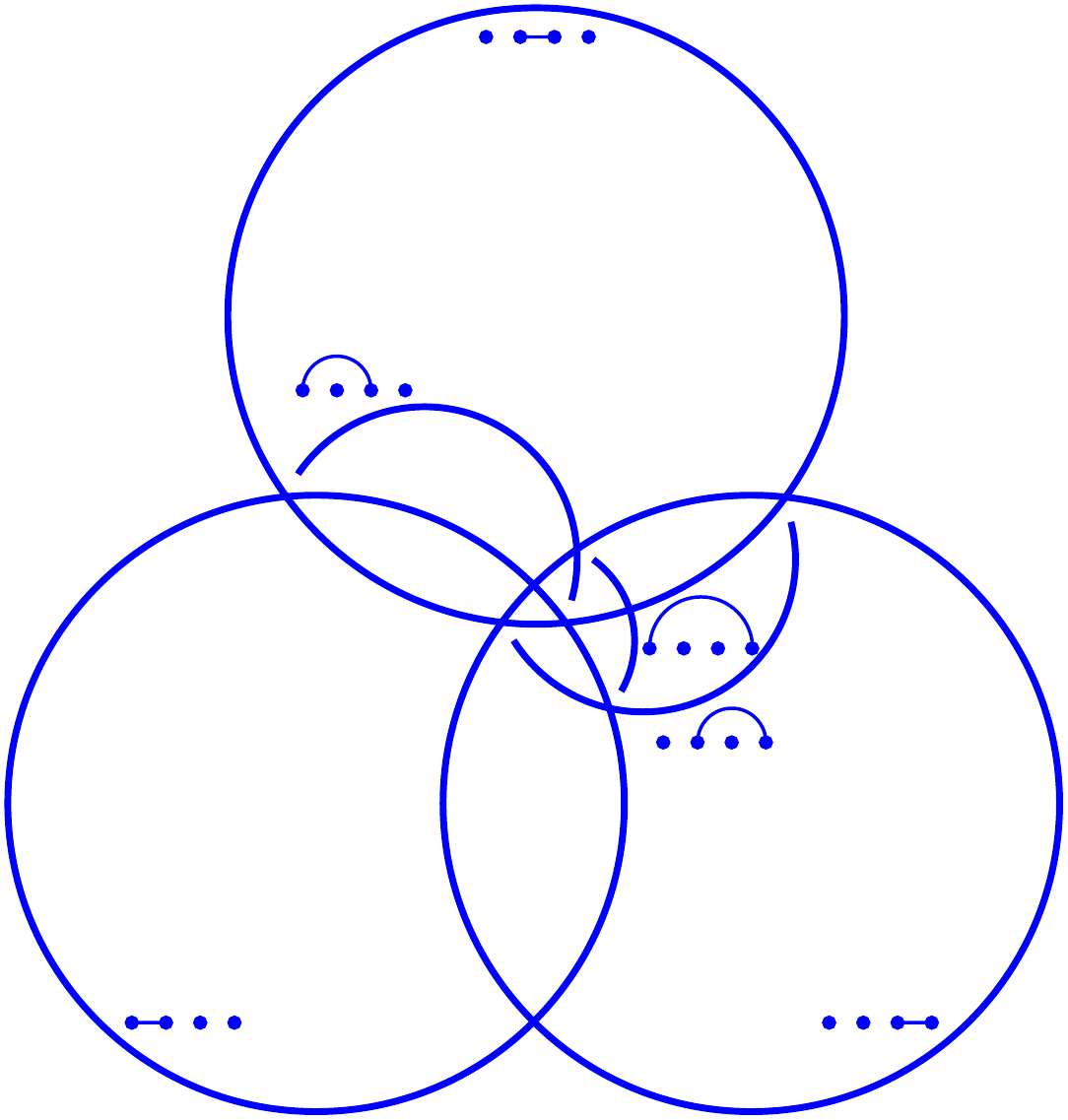}}
	\caption{A stereographic projection of the braid fan~$\Fan_4$ (left) from the pole~$4321$, the corresponding shards (middle), and the quotient fan given by the sylvester congruence~$\equiv^\textrm{sylv}$~(right).}
	\label{fig:shards4}
\end{figure}

We now briefly present shards, a powerful tool to deal with lattice quotients of the weak order with a geometric perspective. Shards were introduced by N.~Reading~\cite{Reading-posetRegions}, see also his recent survey chapters~\cite{Reading-PosetRegionsChapter, Reading-FiniteCoxeterGroupsChapter}. For any~$1 \le i < j \le n$, let~$[i,j] \eqdef \{i, \dots, j\}$ and~${]i,j[} \eqdef \{i+1, \dots, j-1\}$. For any~$S \subseteq {]i,j[}$, the \defn{shard}~$\shard(i,j,S)$ is the cone
\[
\shard(i,j,S) \eqdef \set{\b{x} \in \R^n}{x_i = x_j, \; x_i \ge x_k \text{ for all } k \in S, \; x_i \le x_k \text{ for all } k \in {]i,j[} \ssm S}.
\]
The hyperplane~$\Hyp{ij}$ is decomposed into the~$2^{j-i-1}$~shards~$\shard(i,j,S)$ for all subsets~${S \subseteq {]i,j[}}$. The shards thus have to be thought of as pieces of the hyperplanes of the braid arrangement.
Let
\[
\shards_n \eqdef \set{\shard(i,j,S)}{1 \le i < j \le n \text{ and } S \subseteq {]i,j[}}
\]
denote the collection of all shards of the braid arrangement in~$\R^n$.

We have illustrated the shard decomposition in Figures~\ref{fig:shards3}\,(middle) and~\ref{fig:shards4}\,(middle).
In what follows, we use a convenient notation borrowed from N.~Reading's work on arc diagrams~\cite{Reading-arcDiagrams}: the shard~$\shard(i,j,S)$ is labeled by an arc joining the $i$th dot to the $j$th dot and passing above (resp.~below) the $k$th dot when~$k \in S$ (resp.~when~$k \notin S$).
For instance, the arc~\raisebox{-.16cm}{\includegraphics[scale=.8]{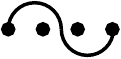}} represents the shard~$\shard(1,4,\{2\})$.

Before going further, we state two technical lemmas connecting rays and shards.

\begin{lemma}
\label{lem:rayInShard}
For any~$\varnothing \ne R \subsetneq [n]$, any~$1 \le i < j \le n$ and any~$S \subseteq {]i,j[}$, the ray~$\ray(R)$ lies in the shard~$\shard(i,j,S)$ if and only if either~$\{i,j\} \subseteq R$ and~$S \subseteq {]i,j[} \cap R$, or~$\{i,j\} \subseteq [n] \ssm R$ and~${]i,j[} \cap R \subseteq S$.
\end{lemma}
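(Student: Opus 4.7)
The plan is to check the membership $\ray(R) \in \shard(i,j,S)$ directly by evaluating the defining (in)equalities of the shard on a convenient positive representative of the ray.

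First I would convert the description of $\ray(R)$ from the weight basis to the standard basis. The cone $C(R)$ consists of the points of $\hyp$ that are constant equal to some value $a$ on $R$ and constant equal to some value $b \ge a$ on $[n] \ssm R$, subject to $|R|\,a + (n-|R|)\,b = 0$. Hence a positive representative of the ray is the vector $\b{v}_R \in \hyp$ with $(\b{v}_R)_k = -(n-|R|)$ for $k \in R$ and $(\b{v}_R)_k = |R|$ for $k \in [n]\ssm R$. Note in particular that the two coordinate values of $\b{v}_R$ are distinct (because $0 < |R| < n$) and that the coordinates indexed by $R$ are strictly smaller than those indexed by $[n]\ssm R$. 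Since $\shard(i,j,S)$ is a cone through the origin, $\ray(R) \subseteq \shard(i,j,S)$ if and only if $\b{v}_R \in \shard(i,j,S)$.

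Next I would analyze the equality $x_i = x_j$. Because $\b{v}_R$ only takes two distinct values, one on $R$ and one on $[n]\ssm R$, this equality holds on $\b{v}_R$ exactly when $\{i,j\}\subseteq R$ or $\{i,j\}\subseteq [n]\ssm R$, splitting the discussion into two symmetric cases.

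In the case $\{i,j\}\subseteq R$, we have $(\b{v}_R)_i = (\b{v}_R)_j = -(n-|R|)$, which is the strictly smaller of the two possible coordinate values. Thus the inequality $x_i \ge x_k$ forces $(\b{v}_R)_k = -(n-|R|)$, i.e.\ $k \in R$, while the inequality $x_i \le x_k$ is automatic. This yields the condition $S \subseteq {]i,j[}\cap R$. Symmetrically, in the case $\{i,j\}\subseteq [n]\ssm R$, the value $(\b{v}_R)_i = (\b{v}_R)_j = |R|$ is the larger one: the inequality $x_i \le x_k$ forces $k \notin R$, while $x_i \ge x_k$ is automatic, giving ${]i,j[}\cap R \subseteq S$. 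Combining the two cases produces exactly the claimed characterization.

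There is no real obstacle here; the only thing to keep track of is not to confuse the two inclusions (so that in the first case $S$ must avoid $[n]\ssm R$, while in the second case $S$ must contain ${]i,j[}\cap R$), and to remember that $|R|$ and $-(n-|R|)$ are strictly ordered so that the inequality constraints really do pin down the indices in $S$ or in ${]i,j[}\ssm S$.
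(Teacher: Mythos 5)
Your proposal is correct and follows essentially the same route as the paper's proof: identify where the ray sits (constant, strictly smaller value on $R$ than on $[n]\ssm R$), observe that $x_i=x_j$ forces $\{i,j\}$ to lie entirely in $R$ or in $[n]\ssm R$, and then check which of the shard's inequalities are automatic and which pin down $S$. The only cosmetic difference is that you compute an explicit integer representative $\b{v}_R$ while the paper works directly with the description of the open half-line $C(R)$.
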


\begin{proof}
Recall that
\begin{itemize}
\item the ray~$\ray(R)$ lies on the open half-line~$\set{\b{x} \in \hyp}{x_{r_1} = \dots = x_{r_p} < x_{s_1} = \dots = x_{s_{n-p}}}$ where $R = \{r_1, \dots, r_p\}$ and~$[n] \ssm R = \{s_1, \dots, s_p\}$,
\item $\shard(i,j,S) \eqdef \set{\b{x} \in \R^n}{x_i = x_j, \; x_i \ge x_k \text{ for all } k \in S, \; x_i \le x_k \text{ for all } k \in {]i,j[} \ssm S}$.
\end{itemize}
If~$|\{i,j\} \cap R| = 1$, then~$\ray(R)_i \ne \ray(R)_j$ so that~$\ray(R) \notin \shard(i,j,S)$.
Assume now that~$\{i,j\} \subseteq R$. Then we have~$\ray(R)_i = \ray(R)_k$ for any~$k \in {]i,j[} \cap R$ and~$\ray(R)_i < \ray(R)_k$ for any~$k \in {]i,j[} \ssm R$. Therefore, $\ray(R) \in \shard(i,j,S)$ if and only if~$S \subseteq {]i,j[} \cap R$.
Assume finally that~$\{i,j\} \subseteq [n] \ssm R$. Then we have~$\ray(R)_i = \ray(R)_k$ for any~$k \in {]i,j[} \cap R$ and~$\ray(R)_i > \ray(R)_k$ for any~$k \in {]i,j[} \cap R$. Therefore, $\ray(R) \in \shard(i,j,S)$ if and only if~${]i,j[} \cap R \subseteq S$.
\end{proof}

For example, the shard~$\shard(1,3,\varnothing)$ contains the rays~$\ray(\{1,3\})$, $\ray(\{1,2,3\})$, $\ray(\{1,3,4\})$ and~$\ray(\{4\})$. See \fref{fig:shards4}\,(middle) where~$\shard(1,3,\varnothing)$ is labeled by the arc~\raisebox{-.23cm}{\includegraphics[scale=.8]{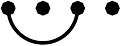}}\,.

\begin{lemma}
\label{lem:specialShard}
Let~$\sigma, \sigma'$ be two adjacent permutations, let~$\varnothing \ne R \subsetneq [n]$ (resp.~$\varnothing \ne R' \subsetneq [n]$) be such that~$\ray(R)$ (resp.~$\ray(R')$) is the ray of~$C(\sigma)$ not in~$C(\sigma')$ (resp.~of~$C(\sigma')$ not in~$C(\sigma)$), and let~$k, k'$ be such that~$R \ssm \{k\} = R' \ssm \{k'\}$. Assume without loss of generality that~$k < k'$. Then the common facet of~$C(\sigma)$ and~$C(\sigma')$ belongs to the shard~$\shard(k, k', R \cap R' \cap {]k,k'[})$.
\end{lemma}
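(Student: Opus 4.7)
My plan is to identify the common facet explicitly and then read off the shard containing it, directly from the definition of $\shard(k,k',S)$.

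First, I would unfold the adjacency hypothesis using Lemma~\ref{lem:linearDependence}: since $\sigma$ and $\sigma'$ are adjacent in $\fS_n$, there exists some $i \in [n-1]$ such that $\sigma'$ is obtained from $\sigma$ by transposing the entries in positions $i$ and $i+1$, and then $R = \sigma([i])$ while $R' = \sigma'([i])$. Consequently $R \ssm R' = \{\sigma(i)\}$ and $R' \ssm R = \{\sigma(i+1)\}$, so the assumption $k < k'$ pins down $k = \sigma(i)$ and $k' = \sigma(i+1)$.

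Next, the common facet $F$ of $C(\sigma)$ and $C(\sigma')$ is obtained from $C(\sigma)$ by collapsing the single inequality $x_{\sigma(i)} \le x_{\sigma(i+1)}$ into the equality $x_k = x_{k'}$; so $F$ lies in the hyperplane $\Hyp{kk'}$, and the task is to pin down the subset $S \subseteq {]k,k'[}$ for which $F \subseteq \shard(k,k',S)$. I would do this elementwise: for each $m \in {]k,k'[}$, write $m = \sigma(\ell)$. Since $m \ne k, k'$ we have $\ell \ne i, i+1$, and the chain of inequalities defining $C(\sigma)$ gives, on $F$,
\[
x_m \le x_{\sigma(i)} = x_k \quad\text{if } \ell < i, \qquad\qquad x_m \ge x_{\sigma(i+1)} = x_{k'} = x_k \quad\text{if } \ell > i+1.
\]
In the first case $m \in \sigma([i-1]) \subseteq R$ and $m$ must belong to $S$; in the second case $m \notin \sigma([i+1]) = R \cup R'$ and $m$ must lie outside~$S$. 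Hence $S = {]k,k'[} \cap R$, and since $R$ and $R'$ agree on $[n] \ssm \{k,k'\}$ we have ${]k,k'[} \cap R = {]k,k'[} \cap R \cap R'$, which is the shard claimed.

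I do not expect any real obstacle: once $k$ and $k'$ are correctly identified as the swapped entries of $\sigma$, everything reduces to unwinding the definitions of $C(\sigma)$ and of $\shard(k,k',S)$. The only point requiring some care is the bookkeeping connecting ``position~$\ell$ is before~$i$ (resp.~after~$i+1$) in $\sigma$'' with ``$m$ belongs to $R$ (resp.~to its complement)'', so as to produce $S$ symmetrically in $R$ and $R'$ in the final line.
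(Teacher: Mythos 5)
Your proof is correct, and it takes a mildly but genuinely different route from the paper's. The paper argues ray by ray: it observes that the rays of the common facet are the $\ray(\sigma([\ell]))$ for $\ell \ne i$, checks via Lemma~\ref{lem:rayInShard} that each of them lies in $\shard(k,k',R \cap R' \cap {]k,k'[})$ (using $\{k,k'\} \subseteq \sigma([\ell])$ and $R \cap R' \subseteq \sigma([\ell])$ when $\ell > i$, and the complementary containments when $\ell < i$), and then concludes for the whole facet by convexity of the shard. You instead verify the defining inequalities of the shard pointwise on the facet, reading them off directly from the chain $x_{\sigma(1)} \le \dots \le x_{\sigma(n)}$ together with the collapsed equality $x_k = x_{k'}$. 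Your version is more self-contained, bypassing both Lemma~\ref{lem:rayInShard} and the convexity step, whereas the paper's version reuses a lemma it needs anyway elsewhere. Your bookkeeping checks out: every $m = \sigma(\ell) \in {]k,k'[}$ has $\ell \notin \{i,i+1\}$; the case $\ell < i$ puts $m \in \sigma([i-1]) = R \cap R'$ and forces $x_m \le x_k$ on the facet, the case $\ell > i+1$ puts $m$ outside $R \cup R' = \sigma([i+1])$ and forces $x_m \ge x_{k'} = x_k$; and the final identification ${]k,k'[} \cap R = {]k,k'[} \cap R \cap R'$ is valid because $k' \notin {]k,k'[}$.
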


\begin{proof}
As in Lemma~\ref{lem:linearDependence}, let~$i \in [n-1]$ be such that~${\sigma(i) = \sigma'(i+1) \defeq k}$ and~${\sigma(i+1) = \sigma'(i) \defeq k'}$ while~$\sigma(\ell) = \sigma'(\ell)$ for any~$\ell \notin \{i, i+1\}$. Define~$R \eqdef \sigma([i])$ and~$R' \eqdef \sigma'([i])$ and observe that ${R \ssm \{k\} = R' \ssm \{k'\}}$.
Then~$\ray(R)$ is the ray of~$C(\sigma) \ssm C(\sigma')$ while~$\ray(R')$ is the ray of~$C(\sigma') \ssm C(\sigma)$, and the rays of~$C(\sigma) \cap C(\sigma')$ are the rays~$\ray(\sigma([\ell])) = \ray(\sigma'([\ell]))$ for~$\ell \ne i$.
For any~$\ell > i$, we have~$\{k,k'\} \subseteq \sigma([\ell])$ and~$R \cap R' \subset \sigma([\ell])$ so that~$\ray(\sigma([\ell]))$ belongs to~$\shard(k, k', R \cap R' \cap {]k,k'[})$ by Lemma~\ref{lem:rayInShard}.
For any~$\ell < i$, we have~$\{k,k'\} \subseteq [n] \ssm \sigma([\ell])$ and~$\sigma([\ell]) \subseteq R \cap R'$ so that~$\ray(\sigma([\ell]))$ belongs to~$\shard(k, k', R \cap R' \cap {]k,k'[})$ by Lemma~\ref{lem:rayInShard}.
We conclude that all rays of the cone~$C(\sigma) \cap C(\sigma')$ are in the shard~$\shard(k, k', R \cap R' \cap {]k,k'[})$, and thus all the cone~$C(\sigma) \cap C(\sigma')$ is by convexity.
\end{proof}

For example, the common facet of the cones~$C(4132)$ and~$C(4312)$ of~$\Fan_4$ is supported by the shard~$\shard(1,3,\varnothing)$. See \fref{fig:shards4}\,(middle) where~$\shard(1,3,\varnothing)$ is labeled by the arc~\raisebox{-.23cm}{\includegraphics[scale=.8]{arc2}}\,.

\medskip
It turns out that the shards are precisely the pieces of the hyperplanes of~$\HA_n$ needed to delimit the cones of the quotient fan~$\Fan_\equiv$ for any lattice congruence~$\equiv$ of the weak order on~$\fS_n$. Conversely, to understand which sets of shards can be used to define a quotient fan, we need the forcing order between shards.
A shard~$\shard(i,j,S)$ is said to \defn{force} a shard~$\shard(k,\ell,T)$ if~$k \le i < j \le \ell$ and~$S = T \cap {]i,j[}$.
We denote this by~$\shard(i,j,S) \succ \shard(k,\ell,T)$.
For example, the forcing order on~$\shards_4$ is represented on \fref{fig:forcingOrder} together with one of its upper ideals.
All its upper ideals containing its three maximal elements are represented in \fref{fig:quotientLattice}.
The following statement uses shards to describe the lattice quotients of the weak order on~$\fS_n$.

\begin{figure}
	\capstart
	\centerline{\includegraphics[scale=.7]{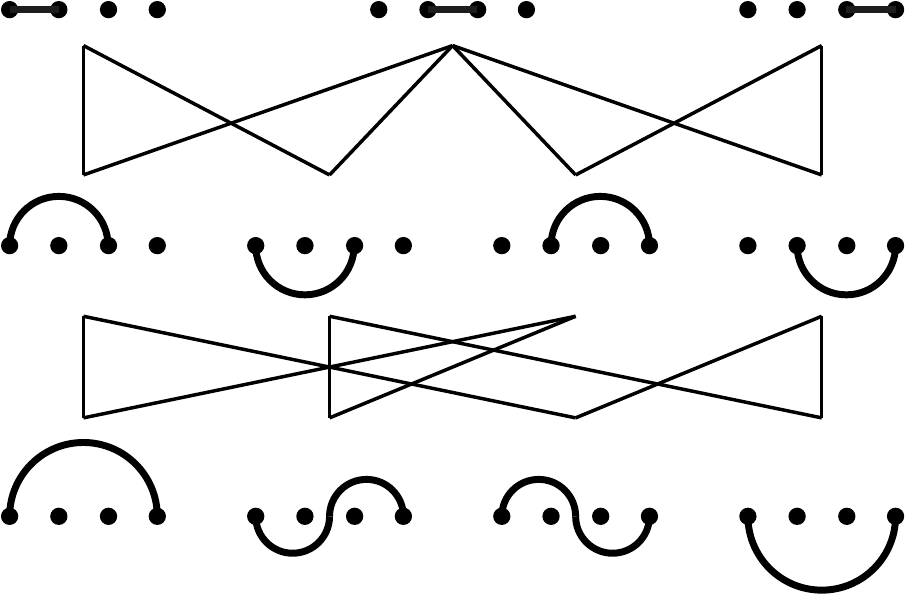} \qquad \includegraphics[scale=.7]{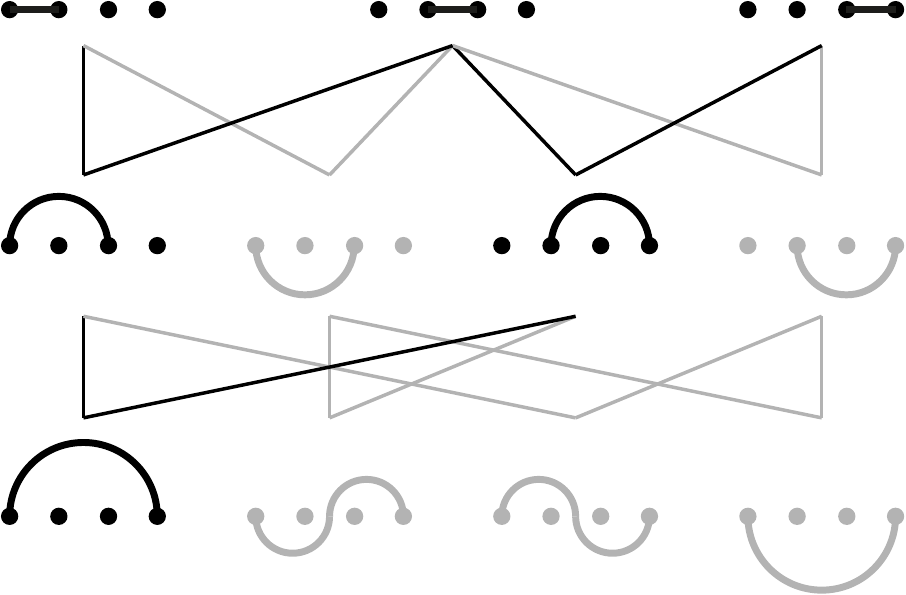}}
	\caption{The forcing order on~$\shards_4$ (left) and its upper ideal consisting of up arcs corresponding to the sylvester congruence~$\equiv^\textrm{sylv}$~(right).}
	\label{fig:forcingOrder}
\end{figure}

\begin{figure}[p]
	\capstart
	\centerline{\includegraphics[scale=.63]{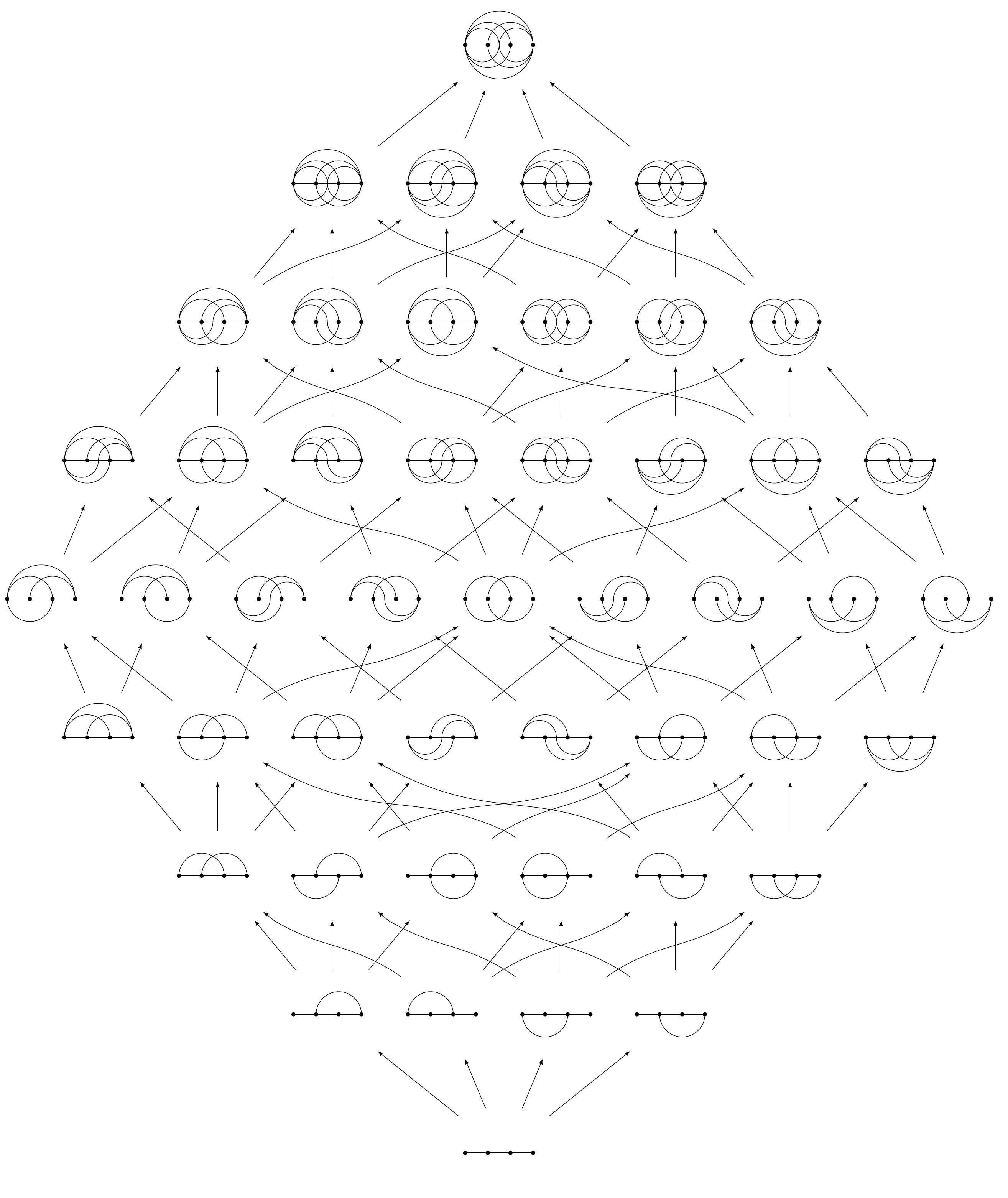}}
    \caption{The lattice of (essential) lattice congruences of the weak order on~$\fS_4$. Each lattice congruence~$\equiv$ is represented by its upper ideal of shards~$\shards_\equiv$, and each shard~$\shard(i,j,S)$ is represented by an arc with endpoints~$i$ and~$j$ and passing above the vertices of~$S$ and below those of~${]i,j[} \ssm S$. We only consider lattice congruences whose shards include all basic shards~$\shard(i,i+1,\varnothing)$, since otherwise their fan is not essential.}
    \label{fig:quotientLattice}
\end{figure}

\begin{theorem}[{\cite[Sect.~10.5]{Reading-FiniteCoxeterGroupsChapter}}]
\label{thm:shardIdeals}
For any lattice congruence~$\equiv$ of the weak order on~$\fS_n$, there is a subset~$\shards_\equiv$ of the shards of~$\shards_n$ such that the interiors of the maximal cones of the fan~$\Fan_\equiv$ are precisely the connected components of~$\hyp \ssm \bigcup \shards_\equiv$. Moreover, $\shards_\equiv$ is an upper ideal of the forcing order~$\prec$ and the map~${\equiv} \mapsto \shards_\equiv$ is a bijection between the lattice congruences of the weak order on~$\fS_n$ and the upper ideals of the forcing order~$\prec$.
\end{theorem}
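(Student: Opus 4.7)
The plan is to construct, for each lattice congruence $\equiv$ of the weak order on $\fS_n$, a subset $\shards_\equiv \subseteq \shards_n$ with the stated geometric property, then verify that this assignment gives a bijection onto upper ideals of the forcing order.

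I would begin by defining $\shards_\equiv$ to be the set of shards containing the common facet $C(\sigma)\cap C(\sigma')$ of at least one pair of adjacent permutations with $\sigma\not\equiv\sigma'$. Lemma~\ref{lem:specialShard} guarantees that every such facet lies in a unique shard, making this definition unambiguous. By Theorem~\ref{thm:fanQuotient}, two adjacent cones of~$\Fan_n$ are merged in~$\Fan_\equiv$ precisely when the corresponding permutations are equivalent. Hence the maximal cones of~$\Fan_\equiv$ are obtained as unions of maximal cones of~$\Fan_n$ linked by chains of adjacent equivalent pairs, and their interiors are exactly the connected components of~$\hyp\ssm\bigcup\shards_\equiv$.

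Next, I would verify that $\shards_\equiv$ is an upper ideal of~$\prec$. Suppose $\shard(k,\ell,T)\in\shards_\equiv$ is witnessed by an adjacent pair $\sigma\not\equiv\sigma'$ with common facet in $\shard(k,\ell,T)$, and suppose $\shard(i,j,S)\succ\shard(k,\ell,T)$, i.e.\ $[i,j]\subseteq[k,\ell]$ with $S=T\cap{]i,j[}$. The goal is to produce an adjacent pair $(\tau,\tau')$ with $\tau\not\equiv\tau'$ whose common facet lies in~$\shard(i,j,S)$. The strategy is to construct $(\tau,\tau')$ from $(\sigma,\sigma')$ by taking meets and joins with explicit auxiliary permutations chosen so that the arc-data for the resulting facet is exactly $(i,j,S)$; the congruence property $x\equiv y,\, x'\equiv y'\Rightarrow x\meet x'\equiv y\meet y'$ (and its join analogue) then forces $\tau\not\equiv\tau'$, since otherwise one could combine the identifications of $(\tau,\tau')$ with the auxiliary equivalences to conclude $\sigma\equiv\sigma'$, a contradiction.

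For the inverse direction, given an upper ideal $\Sigma$ of $\prec$, I declare $\sigma\equiv_\Sigma\sigma'$ iff $C(\sigma)$ and $C(\sigma')$ lie in the same connected component of $\hyp\ssm\bigcup\Sigma$, and I must check that $\equiv_\Sigma$ is a lattice congruence with $\shards_{\equiv_\Sigma}=\Sigma$. By construction, a cover relation $\sigma\lessdot\sigma'$ is identified by $\equiv_\Sigma$ precisely when the unique shard containing the facet $C(\sigma)\cap C(\sigma')$ is not in $\Sigma$. Showing that the transitive closure of these local identifications respects meet and join reduces to verifying that if $\equiv_\Sigma$ identifies one cover relation, it also identifies every cover relation combinatorially forced by it; this is exactly the content of the upper-ideal condition on $\Sigma$, and it yields both that $\equiv_\Sigma$ is a lattice congruence and that the two maps are mutually inverse.

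The main obstacle is the second step: translating the purely combinatorial forcing relation on shards into the algebraic statement that the corresponding inequivalences propagate through meets and joins in $\fS_n$. One must choose the auxiliary permutations so that $(\tau,\tau')$ is still a cover relation (not merely a weak-order inequality) and so that Lemma~\ref{lem:specialShard} pins its common facet inside the prescribed shard~$\shard(i,j,S)$. The arc description of shards, combined with Lemmas~\ref{lem:linearDependence} and~\ref{lem:specialShard}, provides the right bookkeeping to make this construction explicit and to match the two sides of the correspondence.
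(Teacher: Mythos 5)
This theorem is quoted from N.~Reading's work and the paper offers no proof of it, so there is no internal argument to compare against; your proposal is in effect an attempt to reprove Reading's theory of shards and congruences of the weak order. Its architecture does match the shape of the real proof, but it has a genuine gap at the very first step. You define $\shards_\equiv$ as the set of shards containing \emph{at least one} wall $C(\sigma)\cap C(\sigma')$ with $\sigma\not\equiv\sigma'$, and then assert that the interiors of the maximal cones of $\Fan_\equiv$ are the connected components of $\hyp\ssm\bigcup\shards_\equiv$. For that you also need the converse containment: every wall contained in a shard of $\shards_\equiv$ must separate two \emph{inequivalent} permutations, for otherwise that shard would slice through the interior of a maximal cone of $\Fan_\equiv$ and the corresponding component of the complement would be strictly smaller than that interior. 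In other words you must prove that, for a fixed congruence, the walls lying in a single shard are either all contracted or all uncontracted. This ``all-or-nothing along shards'' statement is the crux of Reading's theorem; it rests on the correspondence between shards and join-irreducible permutations and on the fact that a lattice congruence contracts a cover $\sigma\lessdot\sigma'$ if and only if it contracts the associated join-irreducible, none of which follows from Lemmas~\ref{lem:linearDependence} and~\ref{lem:specialShard} or from Theorem~\ref{thm:fanQuotient}.

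The remaining two steps are likewise only gestured at. For the upper-ideal property you describe the correct strategy (propagate a contracted cover through meets and joins to a cover whose wall lies in a prescribed forcing shard), but you do not exhibit the auxiliary permutations, do not verify that the resulting pair $(\tau,\tau')$ is again a cover relation, and you explicitly flag this as ``the main obstacle''; this is precisely the combinatorial content of the forcing order and cannot be taken for granted. For the converse you must show that for \emph{every} upper ideal $\Sigma$ the relation $\equiv_\Sigma$ defined by connected components of $\hyp\ssm\bigcup\Sigma$ is a lattice congruence; reducing this to ``the upper-ideal condition'' presupposes that the combinatorial forcing $\prec$ coincides exactly with congruence-theoretic forcing (no more and no less), which is the other substantial half of Reading's argument. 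As it stands the proposal is a plausible roadmap with the two load-bearing steps missing, so it should be regarded as an outline of the cited proof rather than a proof.
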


For example, we have represented in Figures~\ref{fig:shards3}\,(right) and~\ref{fig:shards4}\,(right) the quotient fans~$\Fan_{\equiv^\textrm{sylv}}$ corresponding to the sylvester congruences~$\equiv^\textrm{sylv}$ on~$\fS_3$ and~$\fS_4$ respectively.
It is obtained 
\begin{itemize}
\item either by glueing together the chambers~$C(\sigma)$ of the permutations in the same sylvester class. These classes are given in \fref{fig:latticeQuotient4} for~$n=4$.
\item or by cutting the space with the shards of~$\shards_{\equiv^\textrm{sylv}}$. These shards are precisely the upper shards~$\shard(i, j, ]i,j[)$, whose arcs pass above all dots in between their endpoints. They form the ideal~$\shards_{\equiv^\textrm{sylv}}$ of~$\shards_4$ represented in \fref{fig:forcingOrder}\,(right).
\end{itemize}

\begin{remark}
We have already mentioned that we represent a shard~$\shard(i,j,S)$ by the arc with endpoints~$i$ and~$j$ and passing above the vertices of~$S$ and below those of~${]i,j[} \ssm S$.
Each region~$C$ of~$\Fan_\equiv$ then corresponds to a unique \defn{noncrossing arc diagram}~\cite{Reading-arcDiagrams}: a collection of arcs that pairwise do not intersect or share a common left endpoint or a common right endpoint.
Namely, the noncrossing arc diagram of a region~$C$ is given by the shards containing a down facet of~$C$.
This correspondence provides the canonical join representation of~$C$.
See~\cite{Reading-arcDiagrams} for precise definitions and details.
We also refer to N.~Reading's surveys~\cite{Reading-PosetRegionsChapter, Reading-FiniteCoxeterGroupsChapter} for further technology on the geometry of lattice quotients  (see also~Remark~\ref{rem:generalCase}).
\end{remark}


\section{Quotientopes}

This section is devoted to the proof of Theorem~\ref{thm:quotientopes}.
We say that a function~$f : \shards_n \to \R_{>0}$ is \defn{forcing dominant} if
\[
f(\shard) \; > \sum_{\shard' \prec \shard} f(\shard')
\]
for any shard~$\shard \in \shards_n$. Such a function clearly exists, take for example~$f(\shard(i,j,S))$ to be~$n^{-(j-i)^2}$. For the remaining of the paper, we fix a forcing dominant function~$f$.

For a shard $\shard = \shard(i,j,S) \in \shards_n$ and a subset~$R \subseteq [n]$, we define the \defn{contribution}~$\gamma(\shard, R)$ of~$\shard$ to~$R$ to be~$1$ if~${|R \cap \{i,j\}| = 1}$ and~$S = R \cap {]i,j[}$, and $0$ otherwise.
It is crucial to observe that the definition of~$\gamma(\shard, R)$ only depends on~$R \cap [i,j]$.
Note also that~$\gamma(\shard, \varnothing) = \gamma(\shard, [n]) = 0$ for every shard~$\shard \in \shards_n$.
For a geometric interpretation of this definition, let~$\HA_n^{ij}$ denote the arrangement of the hyperplanes~$\Hyp{ik}$ and~$\Hyp{kj}$ for all~$k \in {]i,j[}$. Then $\shard(i,j,S)$ contributes to a subset~$\varnothing \ne R \subsetneq [n]$ if the ray~$\ray(R)$ lies in the (closed) region of~$\HA_n^{ij}$ containing~$\shard(i,j,S)$, but not on~$\shard(i,j,S)$.

We consider a lattice congruence~$\equiv$ of the weak order on~$\fS_n$. For a subset~$R \subsetneq [n]$, we define the \defn{height}~$h^f_\equiv(R) \in \R_{>0}$ to be
\[
h^f_\equiv(R) \; \eqdef \sum_{\shard \in \shards_\equiv} f(\shard) \, \gamma(\shard, R).
\]
Note that~$h^f_\equiv(\varnothing) = h^f_\equiv([n]) = 0$ by definition.
This height function fulfills the following property.

\begin{lemma}
\label{lem:inequality}
Let~$\sigma, \sigma'$ be two adjacent permutations. Let~$\varnothing \ne R \subsetneq [n]$ (resp.~$\varnothing \ne R' \subsetneq [n]$) be such that~$\ray(R)$ (resp.~$\ray(R')$) is the ray of~$C(\sigma)$ not in~$C(\sigma')$ (resp.~of~$C(\sigma')$ not in~$C(\sigma)$). Then
\[
h^f_\equiv(R) \, + \, h^f_\equiv(R') \; \ge \; h^f_\equiv(R \cap R') \, + \, h^f_\equiv(R \cup R')
\]
with equality if and only if the common facet of~$C(\sigma)$ and~$C(\sigma')$ belongs to a shard of~$\shards_\equiv$.
\end{lemma}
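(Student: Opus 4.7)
The plan is to open up the inequality shard by shard and isolate a single dominant contribution. Setting
\[
\delta(\shard) \eqdef \gamma(\shard, R) + \gamma(\shard, R') - \gamma(\shard, R \cap R') - \gamma(\shard, R \cup R'),
\]
we have
\[
h^f_\equiv(R) + h^f_\equiv(R') - h^f_\equiv(R \cap R') - h^f_\equiv(R \cup R') \; = \sum_{\shard \in \shards_\equiv} f(\shard) \, \delta(\shard),
\]
so the whole task reduces to controlling the signs of $\delta(\shard)$. Let $k < k'$ be the indices with $R \ssm R' = \{k\}$ and $R' \ssm R = \{k'\}$, and let $\shard^* \eqdef \shard(k, k', R \cap {]k, k'[})$ be the shard supporting the common facet of $C(\sigma)$ and $C(\sigma')$, as identified by Lemma~\ref{lem:specialShard}.

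First I would run a case analysis of $\delta(\shard)$ for $\shard = \shard(a, b, S)$ according to how $[a, b]$ meets $\{k, k'\}$. Because $\gamma(\shard, X)$ depends only on $X \cap [a, b]$ and the four sets $R, R', R \cap R', R \cup R'$ agree outside $\{k, k'\}$, $\delta(\shard)$ vanishes unless $\{k, k'\} \cap [a, b]$ actually distinguishes the four sets on $[a, b]$. Splitting into the configurations $\{a, b\} = \{k, k'\}$, exactly one of $a, b$ lying in $\{k, k'\}$ with the other member of the pair in ${]a, b[}$, and neither endpoint in $\{k, k'\}$ but $k, k' \in {]a, b[}$, and tracking in each case both the cardinality test $|X \cap \{a, b\}| = 1$ and the matching condition $S = X \cap {]a, b[}$ across $X \in \{R, R', R \cap R', R \cup R'\}$, I obtain two key facts: $\delta(\shard^*) = 2$, and every other shard $\shard$ with $\delta(\shard) \ne 0$ satisfies $\delta(\shard) \in \{+1, -1\}$ together with $[k, k'] \subsetneq [a, b]$ and $S \cap {]k, k'[} = R \cap {]k, k'[}$. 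This last condition says exactly that $\shard^* \succ \shard$ in the forcing order.

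Next I would feed this into the upper-ideal property of $\shards_\equiv$ in $\prec$, i.e.\ closure under forcers. This produces a clean dichotomy driven by whether $\shard^* \in \shards_\equiv$: either $\shard^*$ is absent, which by the contrapositive of the upper-ideal condition forces every shard it forces to be absent too and makes the whole sum over $\shards_\equiv$ collapse to~$0$, or $\shard^*$ is present, in which case the term $2 f(\shard^*)$ enters the sum and the remaining signed contributions $\pm f(\shard)$ over $\shard \in \shards_\equiv$ with $\shard^* \succ \shard$ are controlled by the forcing-dominant bound $\sum_{\shard^* \succ \shard} f(\shard) < f(\shard^*)$, keeping the total at least $f(\shard^*) > 0$. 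Piecing the two cases together yields the submodular inequality together with the equality characterization asserted in the lemma.

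The main obstacle is the case analysis of $\delta$ in Step~1: the sub-configurations of $(a, b, S)$ relative to $(k, k')$ each require bookkeeping of the cardinality and matching conditions across the four sets $R, R', R \cap R', R \cup R'$. The crisp consolidation, that every nonzero $\delta(\shard)$ beyond $\shard^*$ comes from a shard with $\shard^* \succ \shard$ (and forces $S \cap {]k, k'[} = R \cap {]k, k'[}$, matching the $S$-part of $\shard^*$), is exactly what reduces Step~2 to a single invocation of forcing dominance and pins down when equality occurs.
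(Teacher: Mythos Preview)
Your proposal is correct and follows essentially the same approach as the paper: the same case analysis on how $[a,b]$ meets $\{k,k'\}$, the same identification of $\shard^*=\shard(k,k',R\cap R'\cap{]k,k'[})$ as the unique shard with $\delta=2$, the same observation that all other nonzero contributions come from shards forced by~$\shard^*$, and the same conclusion via the upper-ideal property of~$\shards_\equiv$ and forcing dominance. Your packaging via the per-shard quantity $\delta(\shard)$ and the explicit values $\delta(\shard)\in\{-1,0,1\}$ for $\shard\ne\shard^*$ is slightly sharper than the paper's bounds (the paper only uses $\gamma\in\{0,1\}$ and obtains $2f(\shard^*)-2\sum_{\shard\prec\shard^*}f(\shard)$ rather than your $2f(\shard^*)-\sum_{\shard\prec\shard^*}f(\shard)$), but this is cosmetic.
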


\begin{proof}
Let~$k, k'$ be such that~$R \ssm \{k\} = R' \ssm \{k'\}$.
Assume without loss of generality that~$k < k'$.
We consider a shard~$\shard = \shard(i,j,S) \in \shards_\equiv$ and evaluate its contributions to~$R$, $R'$, $R \cap R'$ and~$R \cup R'$.
Since~$\gamma(\shard, R)$ only depends on~$R \cap [i,j]$, observe that
\begin{itemize}
\item if~$\{k, k'\} \cap [i,j] = \varnothing$, then~$\gamma(\shard, R) = \gamma(\shard, R') = \gamma(\shard, R \cap R') = \gamma(\shard, R \cup R')$;
\item if~$\{k, k'\} \cap [i,j] = \{k\}$, then~$\gamma(\shard, R) = \gamma(\shard, R \cup R')$ and~$\gamma(\shard, R') = \gamma(\shard, R \cap R')$;
\item if~$\{k, k'\} \cap [i,j] = \{k'\}$, then~$\gamma(\shard, R) = \gamma(\shard, R \cap R')$ and~$\gamma(\shard, R') = \gamma(\shard, R \cup R')$.
\end{itemize}
Note also that $\gamma(\shard, R) = \gamma(\shard, R') = \gamma(\shard, R \cap R') = \gamma(\shard, R \cup R') = 0$ if~$S \ssm \{k,k'\} \ne R \cap R' \cap {]i,j[}$ by definition of the contributions.
We conclude that
\[
\gamma(\shard, R) \, + \, \gamma(\shard, R') \; = \; \gamma(\shard, R \cap R') \, + \, \gamma(\shard, R \cup R')
\]
for any shard~$\shard = \shard(i,j,S)$ for which~$\{k, k'\} \not\subseteq [i,j]$ or~$S \ssm \{k,k'\} \ne R \cap R' \cap {]i,j[}$.

Therefore, we are left with the contributions of the shards~$\shard(i,j,S)$ such that~$\{k, k'\} \subseteq [i,j]$ and~$S \ssm \{k,k'\} = R \cap R' \cap {]i,j[}$.
By definition of the forcing order, all these remaining shards are forced by the shard~$\shard_\bullet \eqdef \shard(k,k', R \cap R' \cap {]k,k'[})$.
Hence, we obtain that,
\[
h^f_\equiv(R) \, + \, h^f_\equiv(R') \, - \, h^f_\equiv(R \cap R') \, + \, h^f_\equiv(R \cup R') = \widetilde{h}^f_\equiv(R) \, + \, \widetilde{h}^f_\equiv(R') \, - \, \widetilde{h}^f_\equiv(R \cap R') \, + \, \widetilde{h}^f_\equiv(R \cup R'),
\]
where
\[
\widetilde{h}^f_\equiv(R) \eqdef \sum_{\substack{\shard \in \shards_\equiv \\ \shard \prec \shard_\bullet}} f(\shard) \, \gamma(\shard, R).
\]
Finally, according to Lemma~\ref{lem:specialShard} and Theorem~\ref{thm:shardIdeals}, $\shard_\bullet$ is a shard of~$\shards_\equiv$ if and only if the cones~$C(\sigma)$ and~$C(\sigma')$ belong to distinct cones of~$\Fan_\equiv$.
We therefore distinguish two cases.
\begin{enumerate}[(i)]
\item Assume first that~$C(\sigma)$ and~$C(\sigma')$ belong to the same cone of~$\Fan_\equiv$. Then~$\shard_\bullet$ is not in~$\shards_\equiv$. Since~$\shards_\equiv$ is an upper ideal of the forcing order, it implies that $\shards_\equiv$ contains no shard~$\shard$ with~$\shard \prec \shard_\bullet$. Therefore~$\widetilde{h}^f_\equiv(R) = \widetilde{h}^f_\equiv(R') = \widetilde{h}^f_\equiv(R \cap R') = \widetilde{h}^f_\equiv(R \cup R') = 0$ and we obtain
\[
h^f_\equiv(R) \, + \, h^f_\equiv(R') \; = \; h^f_\equiv(R \cap R') \, + \, h^f_\equiv(R \cup R').
\]

\item Assume now that~$C(\sigma)$ and~$C(\sigma')$ belong to distinct cones of~$\Fan_\equiv$. Then~$\shard_\bullet$ is in~$\shards_\equiv$. Since~$\gamma(\shard_\bullet, R) = \gamma(\shard_\bullet, R') = 1$, this implies that $\widetilde{h}^f_\equiv(R) \ge  f(\shard_\bullet)$ and $\widetilde{h}^f_\equiv(R') \ge  f(\shard_\bullet)$. Moreover, since~$\gamma(\shard_\bullet, R \cap R') = \gamma(\shard_\bullet, R \cup R') = 0$, we have~$\widetilde{h}^f_\equiv(R \cap R') \le \sum_{\shard \prec \shard_\bullet} f(\shard')$ and~$\widetilde{h}^f_\equiv(R \cup R') \le \sum_{\shard \prec \shard_\bullet} f(\shard')$. We conclude that
\[
h^f_\equiv(R) \, + \, h^f_\equiv(R') \, - \, h^f_\equiv(R \cap R') \, - \, h^f_\equiv(R \cup R') \; \ge \; 2 \, f(\shard_\bullet) \, - \, 2 \sum_{\shard \prec \shard_\bullet} f(\shard) \; > \; 0.
\]
since~$f$ is forcing dominant. This concludes the proof. \qedhere
\end{enumerate}
\end{proof}

We finally obtain the proof of Theorem~\ref{thm:quotientopes}.

\begin{corollary}
\label{coro:quotientopes}
For any lattice congruence~$\equiv$ of the weak order on~$\fS_n$, and any forcing dominant function~$f : \shards_n \to \R_{>0}$, the quotient fan~$\Fan_\equiv$ is the normal fan of the polytope
\[
P_\equiv^f \eqdef \bigset{\b{x} \in \R^n}{\dotprod{\ray(R)}{\b{x}} \le h^f_\equiv(R) \text{ for all } \varnothing \ne R \subsetneq [n]}.
\]
In particular, the graph of~$P_\equiv^f$ oriented in the linear direction~$\alpha \eqdef (-n+1, -n+3, \dots, n-3, n-1)$ is the Hasse diagram of the quotient of the weak order by~${\equiv}$.
\end{corollary}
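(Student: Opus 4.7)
The plan is to derive the corollary by a direct application of Proposition~\ref{prop:polytopalSubfanFan}, taking $\Fan = \Fan_n$ as the complete simplicial fan and $\Fan[G] = \Fan_\equiv$ as the fan to realize. By Theorem~\ref{thm:fanQuotient}, $\Fan_n$ refines $\Fan_\equiv$, so the structural hypotheses of the proposition are in place. The set of ray representatives will be $\rays = \set{\ray(R)}{\varnothing \ne R \subsetneq [n]}$, and the candidate height function will be $h^f_\equiv$, which is positive since $f$ takes positive values and at least one shard contributes to each proper non-empty $R$ (the shard $\shard(i,j,R\cap{]i,j[})$ for any $i\in R, j\notin R$ with $i<j$, when it lies in $\shards_\equiv$; if not, contributions arise from forced shards — this positivity amounts to showing $\Fan_\equiv$ is essential, which is covered by Theorem~\ref{thm:shardIdeals} under our running assumption that $\shards_\equiv$ contains the basic shards).

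The core step is to verify Condition~(2) of Proposition~\ref{prop:polytopalSubfanFan}. Fix two adjacent maximal cones $C(\sigma)$ and $C(\sigma')$ of~$\Fan_n$, and let $\ray(R), \ray(R')$ be the two non-shared rays. By Lemma~\ref{lem:linearDependence}, the unique (up to rescaling) linear dependence among their rays is
\[
\ray(R) + \ray(R') - \ray(R \cap R') - \ray(R \cup R') = 0,
\]
with the convention $\ray(\varnothing)=\ray([n])=0$. In the notation of Proposition~\ref{prop:polytopalSubfanFan} this gives $\alpha=\alpha'=1$ and $\beta_{\ray[s]}=-1$ on the two special shared rays, so the required inequality specializes to
\[
h^f_\equiv(R) + h^f_\equiv(R') \ge h^f_\equiv(R \cap R') + h^f_\equiv(R \cup R'),
\]
with equality if and only if $C(\sigma)$ and $C(\sigma')$ belong to the same cone of~$\Fan_\equiv$. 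This is exactly Lemma~\ref{lem:inequality}, once one combines it with Theorem~\ref{thm:shardIdeals}: the common facet of $C(\sigma)$ and $C(\sigma')$ belongs to a shard of~$\shards_\equiv$ precisely when the two cones are separated in~$\Fan_\equiv$. The convention $h^f_\equiv(\varnothing)=h^f_\equiv([n])=0$ handles the degenerate cases where $R\cap R'=\varnothing$ or $R\cup R'=[n]$ uniformly, matching the convention $\ray(\varnothing)=\ray([n])=0$.

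Once Condition~(2) is verified, Proposition~\ref{prop:polytopalSubfanFan} immediately identifies $P^f_\equiv$ as a polytope whose normal fan is $\Fan_\equiv$, proving the first assertion. For the graph statement, the normal-fan description implies that the vertices of $P^f_\equiv$ biject with the maximal cones of $\Fan_\equiv$, hence with the congruence classes of~$\equiv$, and edges correspond to pairs of adjacent maximal cones of $\Fan_\equiv$. Since edges of $P^f_\equiv$ are parallel to edges of the braid fan walls, the direction~$\alpha$ orients each such edge in the same way as the corresponding edge of the classical permutahedron $\Perm$, and thus along the weak order; this orientation descends to the quotient order by the definition of the lattice quotient.

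The main obstacle has already been overcome in Lemma~\ref{lem:inequality}, whose proof relies on the forcing-dominance of $f$ together with the fact that $\shards_\equiv$ is an upper ideal of the forcing order. The remaining work in the corollary is essentially bookkeeping: matching signs and conventions to apply Proposition~\ref{prop:polytopalSubfanFan}, and checking the straightforward graph-orientation compatibility claim.
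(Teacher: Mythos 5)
Your proposal is correct and follows essentially the same route as the paper: the corollary is obtained by feeding Lemma~\ref{lem:linearDependence} (to identify the linear dependence, so $\alpha=\alpha'=1$) and Lemma~\ref{lem:inequality} (to verify Condition~(2)) into Proposition~\ref{prop:polytopalSubfanFan}, with the graph statement then following from Theorem~\ref{thm:fanQuotient}. Your extra remarks on the positivity of $h^f_\equiv$ and on the edge orientations are harmless elaborations of points the paper leaves implicit.
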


\begin{proof}
We just combine the polytopality criterion of Proposition~\ref{prop:polytopalSubfanFan} with the statements of Lemmas~\ref{lem:linearDependence} and~\ref{lem:inequality}, to obtain the polytopality of the quotient fan~$\Fan_\equiv$.
The end of the statement then follows from~Theorem~\ref{thm:fanQuotient}.
\end{proof}

Note that the inequality description of Corollary~\ref{coro:quotientopes} is in general redundant. More precisely, the inequalities corresponding to the rays of~$\Fan_n$ that are not rays of~$\Fan_\equiv$ are irrelevant.

We call \defn{quotientope} the resulting polytope~$P_\equiv^f$. See Figures~\ref{fig:permAssoCube}, \ref{fig:relevantQuotientopes} and~\ref{fig:quotientopeLattice} for illustrations. Note that not all quotientopes are simple since not all quotient fans are simplicial.

\begin{figure}
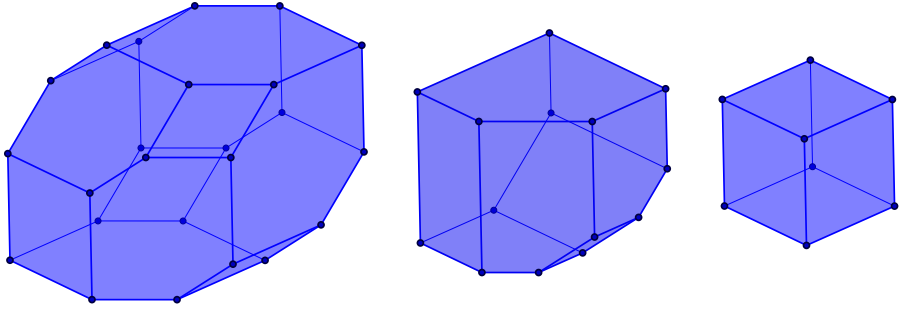

	\capstart
	\centerline{\scalebox{.8}{$\vcenter{\input{figures/permutahedron}} \hspace{-9cm} \vcenter{\input{figures/associahedron}} \hspace{-10.5cm} \vcenter{\input{figures/cube}} \hspace{-1.3cm}$}}
	\caption{Permutahedron (left), associahedron (middle) and cube (right) as quotientopes.}
	\label{fig:permAssoCube}
\end{figure}

\begin{figure}[p]
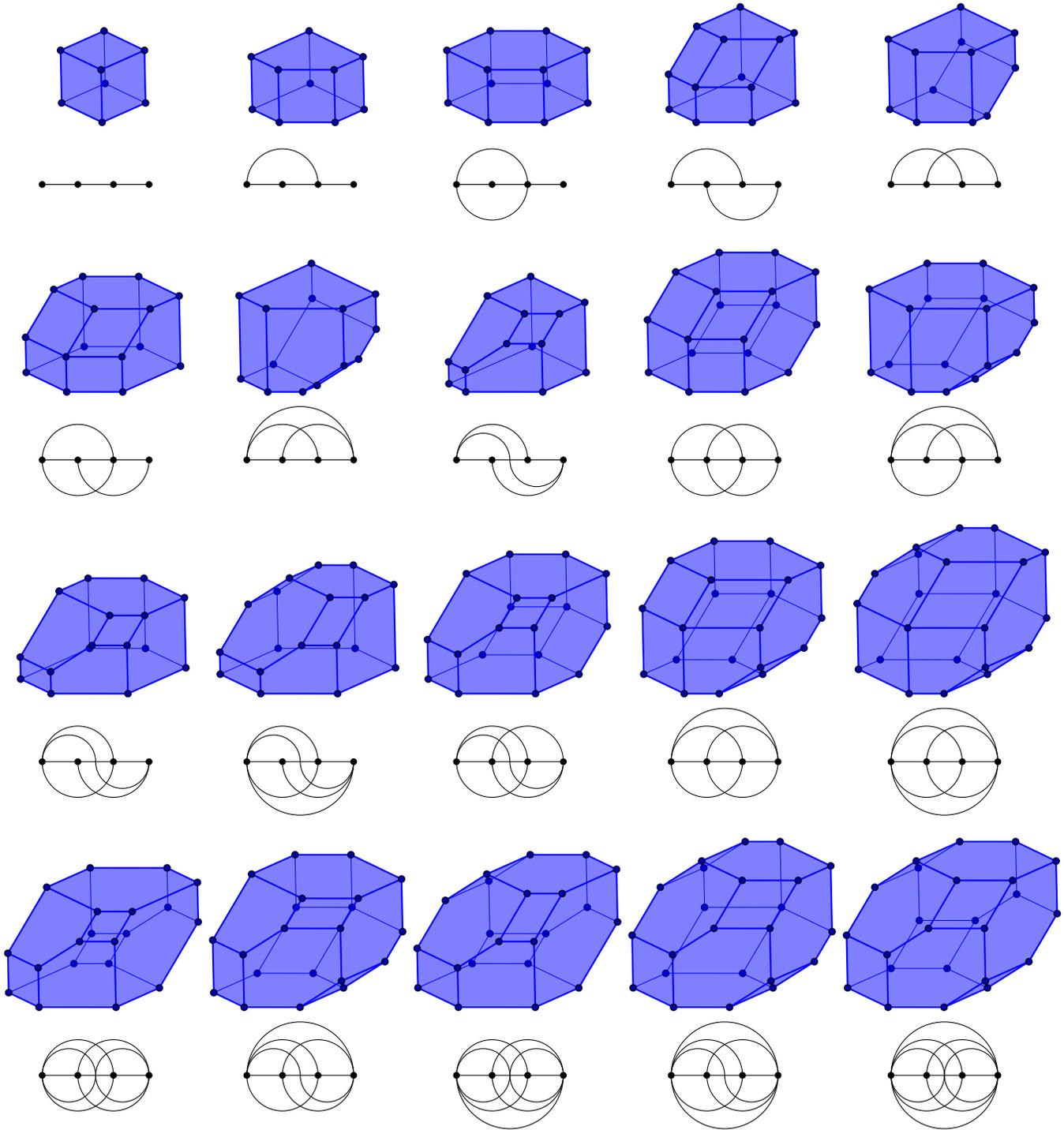

	\capstart
    \centerline{
    \begin{tabular}{c@{}c@{}c@{}c@{}c}
    \input{figures/quotientope0} & \input{figures/quotientope1} & \input{figures/quotientope2} & \input{figures/quotientope3} & \input{figures/quotientope4} \\[-.2cm]
    \input{figures/quotient0} & \input{figures/quotient1} & \input{figures/quotient2} & \input{figures/quotient3} & \input{figures/quotient4} \\[-.1cm]
    \input{figures/quotientope5} & \input{figures/quotientope6} & \input{figures/quotientope7} & \input{figures/quotientope8} & \input{figures/quotientope9} \\[-.1cm]
    \input{figures/quotient5} & \input{figures/quotient6} & \input{figures/quotient7} & \input{figures/quotient8} & \input{figures/quotient9} \\[-.1cm]
    \input{figures/quotientope10} & \input{figures/quotientope11} & \input{figures/quotientope12} & \input{figures/quotientope13} & \input{figures/quotientope14} \\[-.1cm]
    \input{figures/quotient10} & \input{figures/quotient11} & \input{figures/quotient12} & \input{figures/quotient13} & \input{figures/quotient14} \\[.1cm]
    \input{figures/quotientope15} & \input{figures/quotientope16} & \input{figures/quotientope17} & \input{figures/quotientope18} & \input{figures/quotientope19} \\[-.1cm]
    \input{figures/quotient15} & \input{figures/quotient16} & \input{figures/quotient17} & \input{figures/quotient18} & \input{figures/quotient19} \\[.1cm]
    \end{tabular}
    }
    \caption{All $3$-dimensional quotientopes up to symmetries. There are $47$ (essential) lattice congruences on~$\fS_4$ represented in \fref{fig:quotientLattice}, but only $20$ up to horizontal and vertical symmetry. Each lattice congruence~$\equiv$ is represented by its upper ideal of shards~$\shards_\equiv$, and each shard~$\shard(i,j,S)$ is represented by an arc with endpoints~$i$ and~$j$ and passing above the vertices of~$S$ and below those of~${]i,j[} \ssm S$.}
    \label{fig:relevantQuotientopes}
\end{figure}

\begin{figure}[p]
	\capstart
	\centerline{\includegraphics[scale=.38]{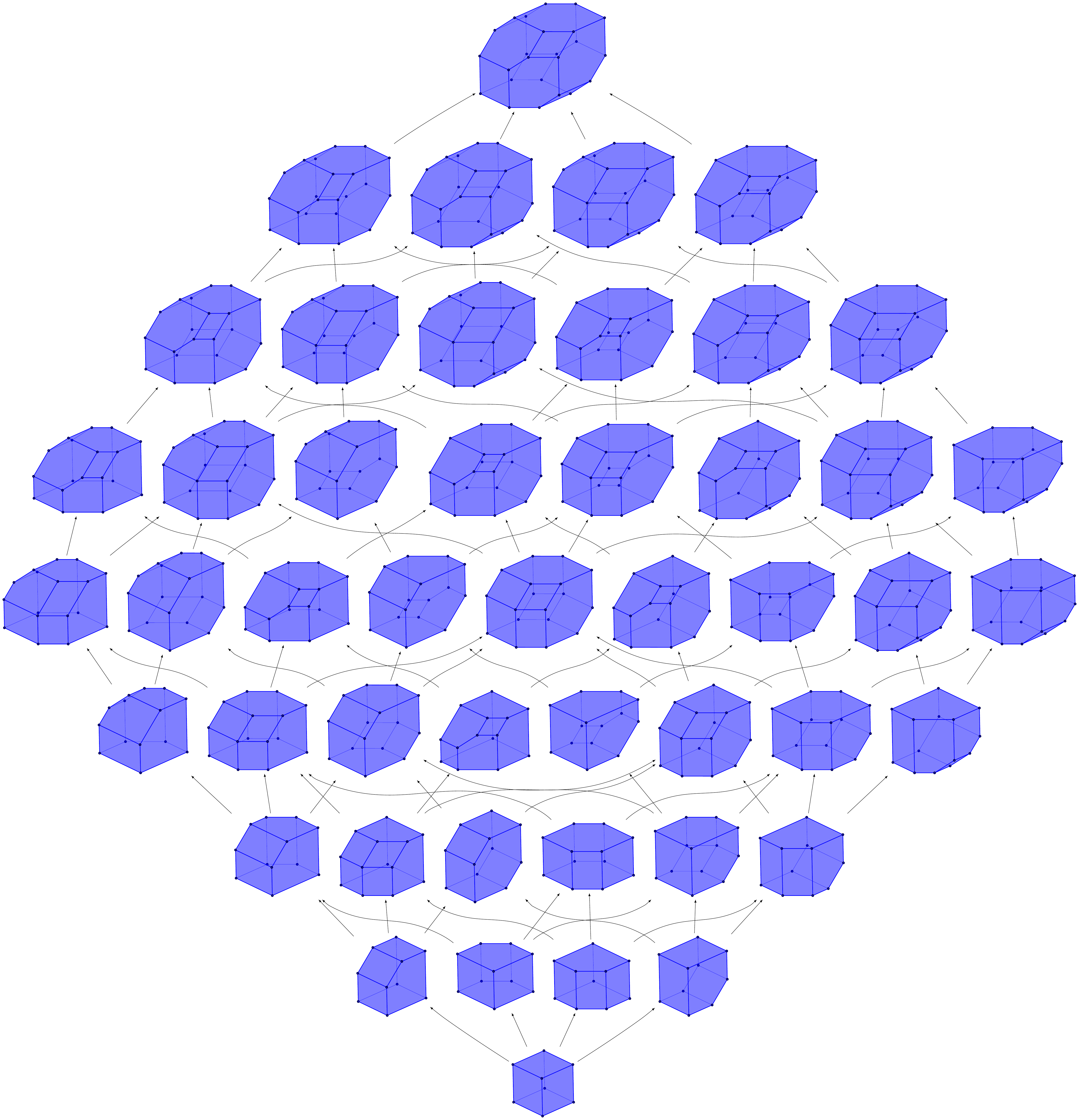}}
    \caption{The quotientope lattice for~$n = 4$: all quotientopes ordered by inclusion (which corresponds to refinement of the lattice congruences). We only consider lattice congruences whose shards include all basic shards~$\shard(i,i+1,\varnothing)$, since otherwise their fan is not essential.}
    \label{fig:quotientopeLattice}
\end{figure}

\begin{remark}[Forcing dominance]
Note that the forcing dominance condition could even be weakened to depend on the lattice congruence~$\equiv$. More precisely, the construction and the proof of Lemma~\ref{lem:inequality} and thus of Corollary~\ref{coro:quotientopes} still work for any function~$f : \shards_n \to \R_{>0}$ such that for any shard~$\shard \in \shards_\equiv$,
\[
f(\shard) \; > \sum_{\substack{\shard' \in \shards_\equiv \\ \shard' \prec \shard}} f(\shard').
\]
\end{remark}

\begin{remark}[\defn{Insidahedra}, \defn{outsidahedra} and \defn{removahedra}]
By definition, the quotientopes are generalized permutahedra~\cite{Postnikov, PostnikovReinerWilliams} as their normal fans coarsen the braid fan. This means in particular that they are obtained by gliding inequalities of the permutahedron orthogonally to their normal vectors. Note that in our construction, the inequalities are glided \defn{inside} the permutahedron. More precisely, if~$\Fan_\equiv$ refines~$\Fan_{\equiv'}$, then $P_\equiv^f$ contains~$P_{\equiv'}^f$. For example, the cube (quotientope of the coarsest congruence so that~$\Fan_\equiv$ is essential) is contained in all quotientopes such that~$\Fan_\equiv$ is essential, while the permutahedron (quotientope of the finest congruence) contains all quotientopes. See~\fref{fig:quotientopeLattice} for illustration. This construction thus contrasts with the classical construction of the associahedron~\cite{Loday} and its generalizations~\cite{HohlwegLange, LangePilaud, Pilaud-signedTreeAssociahedra, PilaudPons-permutrees}, which are all obtained by gliding inequalities \defn{outside} the permutahedron. More precisely, the classical associahedron is obtained by \defn{removing} certain inequalities from the facet description of the classical permutahedron. Note that the similar construction does not work in general: for example, the fan~$\Fan_\equiv$ of the top right congruence of \fref{fig:relevantQuotientopes} is not realized by the intersection of the half-spaces defining facets of the classical permutahedron normal to the rays of~$\Fan_\equiv$.
\end{remark}

\begin{remark}[Towards quotientopes for arbitrary hyperplane arrangements?]
\label{rem:generalCase}
As already mentioned, Theorem~\ref{thm:fanQuotient} actually holds in much more generality (see~\cite{Reading-PosetRegionsChapter} for a detailed survey). Consider a central hyperplane arrangement~$\HA$ defining a fan~$\Fan$, and let~$B$ be a distinguished chamber of~$\Fan$. For any chamber~$C$ of~$\Fan$, define its inversion set to be the set of hyperplanes of~$\HA$ that separate~$B$ from~$C$. The \defn{poset of regions}~$\Pos(\HA, B)$ is the poset whose elements are the chambers of~$\Fan$ ordered by inclusion of inversion sets. A.~Bj\"orner, P.~Edelman and G.~Ziegler discuss in~\cite{BjornerEdelmanZiegler} some conditions for this poset of regions to be a lattice: $\Pos(\HA,B)$ is always a lattice when the fan~$\Fan$ is simplicial, and the chamber~$B$ must be a simplicial for~$\Pos(\HA,B)$ to be a lattice. In~\cite{Reading-HopfAlgebras}, N.~Reading proves that when~$\Pos(\HA,B)$ is a lattice, any lattice congruence~$\equiv$ of~$\Pos(\HA,B)$ defines a complete fan~$\Fan_\equiv$ obtained by glueing together the cones of the fan~$\Fan$ that belong to the same congruence class of~${\equiv}$. The polytopality of this quotient fan~$\Fan_\equiv$ however remains open in general. Although the polytopality criterion of Proposition~\ref{prop:polytopalSubfanFan} seems a promising tool to tackle this problem when~$\Fan$ is simplicial, the general case seems much more intricate. Let us observe that we benefited from three specific features of the Coxeter arrangement of type~$A$:
\begin{itemize}
\item we used the simpliciality of the arrangement,
\item we used the action of~$\fS_n$ to transport our understanding of the linear dependencies from the initial chamber to any other chamber,
\item these linear dependencies are very simple in type~$A$ (only $3$ or $4$ terms and $0/1$ coefficients).
\end{itemize}
These properties hold for any finite Coxeter group (for the last property though, the linear dependencies can get up to $5$ terms, and some coefficients equal to~$2$ appear in non-simply-laced types). This suggests that the strategy of this paper could produce polytopal realizations when the hyperplane arrangement is the Coxeter arrangement of a finite Coxeter group.
\end{remark}


\addtocontents{toc}{\vspace{.3cm}}
\section*{Acknowledgements}

We thank N.~Reading for comments on a preliminary version of this paper and an anonymous referee for many relevant suggestions that greatly improved the presentation.


\bibliographystyle{alpha}
\bibliography{quotientopes}
\label{sec:biblio}

\end{document}